\documentclass[hidelinks,onefignum,onetabnum]{siamart251216}

\usepackage{lipsum}
\usepackage{amsfonts}
\usepackage{graphicx}
\usepackage{epstopdf}
\usepackage{algorithmic}
\ifpdf
  \DeclareGraphicsExtensions{.eps,.pdf,.png,.jpg}
\else
  \DeclareGraphicsExtensions{.eps}
\fi
\usepackage{amsmath,amsfonts,amssymb,amscd,mathrsfs}
\usepackage{calc}
\usepackage{multicol}
\usepackage{empheq}
\usepackage{amsopn}

\usepackage{arydshln}
\usepackage[caption=false]{subfig}
\usepackage{enumitem}

\newcommand{\diff}{\mathrm{d}}

\definecolor{HsL}{RGB}{255,0,255}
\definecolor{HtL}{RGB}{90,13,67}

\newsiamremark{remark}{Remark}
\newsiamremark{hypothesis}{Hypothesis}
\crefname{hypothesis}{Hypothesis}{Hypotheses}
\newsiamthm{claim}{Claim}
\newsiamremark{fact}{Fact}
\crefname{fact}{Fact}{Facts}

\headers{Low-rank Framework for NMSSE}{Zhuohan Zhang, and Zhenning Cai}

\title{
A Low-Rank Hierarchical Framework for the Non-Markovian Stochastic Schrödinger Equation with Convergence Analysis
\thanks{
\funding{The work of Zhenning Cai was supported by the Academic Research Fund of the Ministry of Education of Singapore under Grant No. A-8002392-00-00.}
}
}

\author{
Zhuohan Zhang\thanks{
Department of Mathematics, National University of Singapore, 10 Lower Kent Ridge Road, Singapore 119076
(\email{zhuohan.zhang@u.nus.edu}).}
\and Zhenning Cai\thanks{ Corresponding author. 
Department of Mathematics, National University of Singapore, 10 Lower Kent Ridge Road, Singapore 119076  (\email{matcz@nus.edu.sg}).}
}

\ifpdf
\hypersetup{
  pdftitle={Low-rank Hierarchical Framework of NMSSE},
  pdfauthor={Z. Zhang and Z. Cai}
}
\fi

\newsiamremark{example}{Example}
\begin{document}

\maketitle

\begin{abstract}
We propose and analyze a novel numerical framework for the non-Markovian stochastic Schr\"{o}dinger equation (NMSSE) based on a low-rank approximation of the bath correlation functions. By decomposing the memory kernel into a finite-dimensional representation, we derive a truncated system of hierarchical equations that effectively balances computational tractability with physical fidelity. A rigorous convergence analysis is established for the hierarchical framework under mild assumptions. We demonstrate that our formulation serves as a mathematical generalization of the Hierarchy of Pure States (HOPS), encompassing it as a special case while offering a more flexible representation of non-Markovian effects. Numerical experiments across several benchmark models are presented to illustrate the validity and efficacy of the proposed method.
\end{abstract}

\begin{keywords}
Non-Markovian dynamics, Stochastic Schr\"{o}dinger equation, Low-rank approximation, Convergence analysis, Hierarchical equations.
\end{keywords}

\begin{MSCcodes}
65C30, 81S22, 60H35, 65L03.
\end{MSCcodes}

\section{Introduction}
\label{sec:intro}
Open quantum systems (OQS) play a central role in quantum physics and chemistry, with applications in quantum information processing \cite{Nielsen_Chuang_2010}, quantum optics \cite{carmichael1993OpenSystemsApproach}, and chemical physics \cite{Weiss2021QuantumDissipative}. A key difficulty in their simulation is the non-Markovian memory encoded in the bath correlation function (BCF). Depending on how the reduced dynamics is represented, existing numerical methods are often organized into two broad classes.

The first class evolves the reduced density matrix (RDM). Representative approaches include path integrals \cite{Feynman1948SpaceTimeApproach}, such as QuAPI \cite{Makri1992Improved} and the Dyson series iterative method \cite{cai2025SecondorderDiscretizationDyson}. To reduce the computational burden, i-QuAPI \cite{Makri1995NumericalPathIntegral, Makri1998QuantumDissipative} truncates memory, and \cite{zhan2025ReducingSpatialTemporal} combines low-rank approximation with the frozen Gaussian approximation. Another line of work starts from the Nakajima-Zwanzig equation \cite{Nakajima1958OnQuantumTheory, Zwanzig1960EnsembleMethod}, including TTM \cite{Cerrillo2014NonMarkovianDynamicalMaps} and HEOM \cite{Tanimura1990NonperturbativeExpansion, tanimura2020NumericallyExactApproach}. Despite this progress, RDM-based methods still face a severe curse of dimensionality for large systems or complex baths.

The second class is stochastic unravelling \cite{Breuer2007TheTheoryOfOQS}, which represents the reduced state by an ensemble of stochastic trajectories. In the Markovian regime, the stochastic Schrödinger equation \cite{Dalibard1992WavefunctionApproach, carmichael1993OpenSystemsApproach} derived from the Lindblad master equation \cite{lindblad1976GeneratorsQuantumDynamical} can be solved by standard SDE methods \cite{Brehier2023AnalysisOfASplittingScheme, huang2026ConformalStructurepreservingMethods}. For general non-Markovian dynamics, however, the NMSSE \cite{Diosi1997NMSSE, NMQSD} is difficult to solve directly. Existing strategies include time-local formulations based on the $O$-operator \cite{NMQSD, Yu1999NonMarkovianQSDPerturbativeApproach} and the hierarchy of pure states (HOPS) \cite{suess2014HierarchyStochasticPure}, but these methods are still constrained by the accuracy of the $O$-operator approximation or by the exponential decomposition of the BCF. A rigorous convergence analysis for hierarchical stochastic solvers also remains limited.

Motivated by these limitations, we propose a low-rank hierarchical method for the NMSSE. The method uses a low-rank approximation of the BCF to compress environmental memory, leading to a new infinite-dimensional hierarchy that generalizes HOPS. We then introduce a finite-order truncation and prove its convergence under the assumption that the NMSSE solution is unique. Numerical experiments on benchmark OQS models confirm the efficiency and accuracy of the proposed algorithm.

The remainder of this paper is organized as follows. Section \ref{sec:NMSSE} introduces the NMSSE. Section \ref{sec:method} develops the low-rank approximation induced hierarchical framework and its finite truncation. Section \ref{sec:convergence} establishes the convergence of the hierarchical framework, including the low-rank approximation and the finite truncation. Section \ref{sec:numerical_scheme} presents numerical schemes for the finite hierarchical system. Section \ref{sec:numerical_experiments} reports numerical experiments, and Section \ref{sec:conclusion} concludes the paper.%

\section{Preliminaries}
\label{sec:NMSSE}

Let $\mathcal{H}$ be a separable Hilbert space with inner product $\langle\cdot,\cdot\rangle_{\mathcal{H}}$ and induced norm $\|\cdot\|_{\mathcal{H}}$. Let $(\Omega,\mathcal{F},\mathbb{P})$ be a probability space.  We consider the following non-Markovian stochastic Schr\"{o}dinger equation \cite{Diosi1997NMSSE,NMQSD}:
\begin{equation}
\begin{aligned}
    \label{eq:NMSSE}
    \frac{\partial}{\partial t} \psi_t = \left(-iH+z_tL\right)\psi_t - L^{\dagger}\int_{0}^{t}\diff s\,\alpha(t,s)\frac{\delta}{\delta z_s}\psi_t, \quad t\in[0, T].
\end{aligned}
\end{equation}
The mathematical constituents are defined as follows:
\begin{itemize}
    \item $\alpha(\cdot,\cdot)\in \mathcal{C}([0,T]^2,\mathbb{C})$ is the two-time bath correlation function, which characterizes temporal correlations of the bath. It is Hermitian and positive semidefinite; i.e., for any $f\in L^2([0,T],\mathbb{C})$,
    \begin{equation*}
        \int_0^T\int_0^T f^*(u)\alpha(u,v)f(v)\diff u\diff v \geq 0.
    \end{equation*}
    \item $z\in L^2(\Omega; \mathcal{C}([0,T],\mathbb{C}))$ is a complex-valued centered Gaussian process over the probability space $(\Omega, \mathcal{F}, \mathbb{P})$ with covariance $\alpha$ and pseudo-covariance $0$, i.e.,
    \begin{equation}
        \mathbb{E}(z_t) = 0, \quad \mathbb{E}(z_tz_s) = 0,\quad \mathbb{E}(z_t^*z_s) = \alpha(t,s),\quad \forall t,s\in [0, T].
    \label{eq:noise_condition}
    \end{equation}
    Let $\mathcal{F}_t^z$ be the $\sigma$-algebra generated by $z$, i.e., $\mathcal{F}_t^z = \sigma(z_s: s\in[0,t])$.
    \item $H:D(H)\subset\mathcal{H}\to\mathcal{H}$, which represents the system Hamiltonian, is a self-adjoint linear operator such that $-iH$ serves as the infinitesimal generator of a unitary group $S(\cdot): S(t)=e^{-itH}$.
    \item $L\in\mathcal{L}(\mathcal{H})$ is the system-bath coupling operator, which is bounded with respect to $\|\cdot\|_{\mathcal{H}}$, i.e.,
    \begin{equation*}
        \|L\|_{\mathcal{H}} \coloneqq \sup_{\|\psi\|_{\mathcal{H}}= 1}\|L\psi\|_{\mathcal{H}} < \infty.
    \end{equation*}
    \item The initial value $\psi_0$ is an $\mathcal{H}$-valued random variable independent of $\mathcal{F}_T^z$. We consider the solution $\psi_t$ adapted to the filtration $\mathcal{F}_t^{z,\psi_0}$ generated by $\psi_0$ and $z_s, s\in[0,t]$.
    \item For the last term on the right-hand side of \eqref{eq:NMSSE}, we denote
    \begin{equation*}
        \mathcal{R}_t\psi_t \coloneq \int_{0}^{t} \diff s\, \alpha(t,s)\frac{\delta}{\delta z_s}\psi_t,
    \end{equation*}
    where $\mathcal{R}_t : D(\mathcal{R}_t)\subset\mathcal{Y}_t\to\mathcal{H}$ is a linear operator, and $\mathcal{Y}_t$ denotes the set of all $\mathcal{F}_t^{z,\psi_0}$-measurable $\mathcal{H}$-valued random variables over $(\Omega, \mathcal{F}, \mathbb{P})$.
    The term $\mathcal{R}_t\psi_t$ 
    is interpreted in the sense of the G\^{a}teaux derivative, i.e., for any $\omega\in\Omega$,
    \begin{equation*}
        \mathcal{R}_t\psi_t(\omega) = \left.\frac{\mathrm{d}}{\mathrm{d}\varepsilon}\mathscr{M}_t\left(\psi_0(\omega),z(\omega)|_{[0,t]} + \varepsilon\alpha(t,\cdot)|_{[0,t]}\right)\right|_{\varepsilon=0},
    \end{equation*}
    where 
    $\mathscr{M}_t: \mathcal{H}\times \mathcal{C}([0,t],\mathbb{C}) \to \mathcal{H}$ is a mapping such that 
    \begin{equation*}
        \psi_t(\omega) = \mathscr{M}_t\left(\psi_0(\omega),z(\omega)|_{[0,t]}\right)\qquad \forall\, \omega\in \Omega.
    \end{equation*}
    The existence of $\mathscr{M}_t$ is guaranteed by the Doob-Dynkin lemma \cite{Kallenberg2021FoundationsOf}.
\end{itemize}

An $\mathcal{F}_t^{z,\psi_0}$-adapted $\mathcal{H}$-valued process $\psi_t$, $t\in[0,T]$, is a \emph{mild solution} \cite{Prato2014StochasticEquations} to \eqref{eq:NMSSE} if $\psi_t$ takes values in $D(\mathcal{R}_t)$, a.s.,
\begin{subequations}
\begin{align}
    \label{eq:mild_condition_a}
    & \mathbb{P}\left(\int_{0}^{T}\left(1+|z_t|\|L\|_{\mathcal{H}}\right)\|\psi_t\|_{\mathcal{H}}\diff t < \infty\right) = 1, \\
    \label{eq:mild_condition_b}
    & \mathbb{P}\left(\int_{0}^{T}\left\|\mathcal{R}_t\psi_t\right\|_{\mathcal{H}}\diff t < \infty\right) = 1,
\end{align}
\end{subequations}
and for arbitrary $t\in[0,T]$, it holds a.s. that
\begin{equation}
    \label{eq:mild_formulation}
    \psi_t = S(t)\psi_0 + \int_0^t S(t-s) \left(z_sL - L^{\dagger}\mathcal{R}_s\right)\psi_s \diff s.
\end{equation}

\begin{remark}
\label{rmk:strong_mild}
    Let $\widetilde{\psi}_t = S(-t)\psi_t$ and $\widetilde{L}(s) = S(-s)L S(s)$ denote the interaction picture of the mild solution $\psi_t$ and the operator $L$, respectively. Then it holds that 
    \begin{equation}
    \label{eq:strong_solution}
        \widetilde{\psi}_t = \widetilde{\psi}_0 + \int_0^t \left(z_s\widetilde{L}(s) - \widetilde{L}^{\dagger}(s)\mathcal{R}_s\right)\widetilde{\psi}_s \diff s,
    \end{equation}
    which follows from the fact that $\mathcal{R}_s$ commutes with $S(\cdot)$. This identity indicates that the mild solution of \eqref{eq:NMSSE} is equivalent to the strong solution of the NMSSE in the interaction picture:
    \begin{equation}
    \label{eq:NMSSE_interaction}
        \frac{\partial}{\partial t}\widetilde{\psi}_t = \left(z_t\widetilde{L}(t) - \widetilde{L}^{\dagger}(t)\mathcal{R}_t\right)\widetilde{\psi}_t.
    \end{equation}
    In what follows, we shall exploit this equivalence to analyze the strong solution of the interaction picture equation whenever it is notationally or analytically advantageous.
\end{remark}


The main numerical challenge in the NMSSE is the functional-derivative integral together with colored noise. This feature prevents a direct application of standard solvers for ordinary differential equations and stochastic differential equations, and makes direct simulation of non-Markovian dynamics computationally difficult. To address this issue, existing approaches usually approximate or reformulate the functional derivative, thereby replacing the original temporally nonlocal problem with a tractable evolution system. In many cases, this corresponds to embedding the non-Markovian dynamics into an enlarged time-local, or effectively Markovian, system.


However, existing methods often impose restrictive assumptions on the bath correlation function or lack rigorous convergence guarantees. Motivated by these limitations, we develop a generalized hierarchical framework based on a low-rank approximation of the BCF, and we provide a rigorous convergence analysis to establish its theoretical validity.

\section{A low-rank hierarchical framework}
\label{sec:method}
\subsection{Infinite hierarchy}
We begin by approximating the bath correlation function through a low-rank approximation. Specifically,
\begin{equation}
    \alpha(t,s) \approx \sum_{j=1}^{r} \lambda_j V_j^*(t)V_j(s),\qquad \forall t,s\in[0,T],
\label{eq:low_rank_decomp_corr_func}
\end{equation}
where $\lambda_1\geq\cdots\geq\lambda_r\geq0$ and $V_j\in L^2([0,T],\mathbb{C})$ for $j=1,\cdots,r$. The existence of such an approximation follows from the positive semidefiniteness of the BCF and Mercer's theorem \cite{Mercer1909FunctionsOfPositive,Rasmussen2005GaussianProcess,Scholkopf2001LearningWithKernels,Riesz2012functional}. In practice, we compute this representation via singular value decomposition (SVD) or eigenvalue decomposition of the discretized bath correlation matrix $A = \{\alpha(t_i,t_j)\}$, and retain the leading $r$ components.

Substituting \eqref{eq:low_rank_decomp_corr_func} into \eqref{eq:NMSSE}, we obtain
\begin{equation}
    \frac{\partial}{\partial t} \psi_t = \left(-iH+z_tL\right)\psi_t - L^{\dagger}\sum_{j=1}^{r}\lambda_jV_j^*(t)\int_{0}^{t}\diff sV_j(s)\frac{\delta}{\delta z_s}\psi_t.
    \label{eq:low_rank_decomp_NMSSE}
\end{equation}
To derive a hierarchy following the HOPS formalism, we introduce the operators
\begin{equation}
    D_j = \int_{0}^{\infty} \diff sV_j(s)\frac{\delta}{\delta z_s}, \qquad j = 1,2,\cdots,r.
\label{eq:operator_Dj}
\end{equation}
Although the upper limit of integration in \eqref{eq:operator_Dj} is extended to $\infty$ in contrast to $t$ in \eqref{eq:low_rank_decomp_NMSSE}, the following identity holds:
\begin{equation*}
    D_j \psi_t = \int_{0}^{\infty} \diff sV_j(s)\frac{\delta}{\delta z_s}\psi_t = \int_{0}^{t} \diff sV_j(s)\frac{\delta}{\delta z_s}\psi_t.
\end{equation*}
This equivalence follows from the fact that $\psi_t$ is $\mathcal{F}_t^{z,\psi_0}$-adapted. Next, let $\boldsymbol{k}\in\mathbb{N}^r$ be a multi-index whose magnitude is defined by $|\boldsymbol{k}|\coloneqq\sum_{j=1}^rk_j$. We define the auxiliary wavefunctions recursively as
\begin{equation*}
    \psi_t^{(\boldsymbol{0})}\coloneqq \psi_t, \qquad \psi^{(\boldsymbol{e}_j)}_t\coloneqq D_j\psi_t^{(\boldsymbol{0})}, \qquad \psi_t^{(\boldsymbol{k}+\boldsymbol{e}_j)} \coloneqq D_j\psi_t^{(\boldsymbol{k})},
\end{equation*}
where $\boldsymbol{e}_j$ denotes the unit vector with a $1$ at the $j$th position and $0$ elsewhere. Consequently, the evolution equation for the primary wavefunction $\psi_t^{(\boldsymbol{0})}$ is given by
\begin{equation*}
    \frac{\partial}{\partial t}\psi_t^{(\boldsymbol{0})} = (-iH + z_tL)\psi_t^{(\boldsymbol{0})} - L^{\dagger}\sum_{j=1}^{r}\lambda_jV_j^*(t)\psi_t^{(\boldsymbol{e}_j)}.
\end{equation*}

We now derive the evolution equation for $\psi_t^{(\boldsymbol{e}_j)}$. Differentiating in time and applying the chain rule gives
\begin{subequations}
\begin{align}
    \label{eq:deriv_hier_1}
    \frac{\partial}{\partial t}\psi_t^{(\boldsymbol{e}_j)} &= \frac{\partial}{\partial t}\left(D_j\psi_t^{(\boldsymbol{0})}\right) \\
    &= \frac{\partial D_j}{\partial t}\psi_t^{(\boldsymbol{0})} + D_j\frac{\partial\psi_t^{(\boldsymbol{0})}}{\partial t} \\
    &= D_j\left[\left(-iH+z_tL\right)\psi_t^{(\boldsymbol{0})} - L^{\dagger}\sum_{k=1}^{r}\lambda_kV_k^*(t)\psi_t^{(\boldsymbol{e}_k)}\right] \\
    &= \left(-iH+z_tL\right)\psi_t^{(\boldsymbol{e}_j)} + V_j(t)L\psi_t^{(\boldsymbol{0})} - L^{\dagger}\sum_{k=1}^{r}\lambda_kV^*_k(t)\psi_t^{(\boldsymbol{e_j}+\boldsymbol{e}_k)},
    \label{eq:deriv_hier_4}
\end{align}
\end{subequations}
In \eqref{eq:deriv_hier_4}, we have utilized the fact that $D_j$ commutes with both $H$ and $L$, along with the identity $D_jz_t = V_j(t)$. This relation demonstrates that the evolution of $\psi_t^{(\boldsymbol{e}_j)}$ depends on the higher-order auxiliary wavefunctions $\psi_t^{(\boldsymbol{e}_j+\boldsymbol{e}_k)}$. By induction, we can present the coupled hierarchy for an arbitrary multi-index $\boldsymbol{k}\in\mathbb{N}^r$:
\begin{equation*}
    \frac{\partial}{\partial t} \psi_t^{(\boldsymbol{k})} = (-iH+z_tL)\psi_t^{(\boldsymbol{k})} + L\sum_{j=1}^{r}k_jV_j(t)\psi_t^{(\boldsymbol{k}-\boldsymbol{e}_j)} - L^{\dagger}\sum_{j=1}^{r}\lambda_jV_j^*(t)\psi_t^{(\boldsymbol{k}+\boldsymbol{e}_j)},
\end{equation*}
where $\psi_t^{(\boldsymbol{k})}\equiv 0$ for $\boldsymbol{k}\notin\mathbb{N}^r$.

\subsection{Finite truncation}
To obtain a computationally tractable model, we truncate the infinite hierarchy at a finite order. In contrast to the various truncation schemes developed for HOPS \cite{suess2014HierarchyStochasticPure}, we adopt a straightforward zero-truncation strategy; a detailed convergence analysis of this approach is provided in the subsequent section. Let $N$ denote the truncation order. The state of the system is then described by the set of auxiliary wavefunctions
\[
    \{\psi_t^{(\boldsymbol{k})}:\boldsymbol{k}\in\mathcal{K}^r_N\},
\]
where $\mathcal{K}^r_N\coloneqq\{\boldsymbol{k}\in\mathbb{N}^r:|\boldsymbol{k}|\leq N\}$. By imposing $\psi_t^{(\boldsymbol{k})}\equiv 0$ for $\boldsymbol{k}\notin\mathcal{K}^r_N$, we obtain the closed system
\begin{equation}
    \frac{\partial}{\partial t} \psi_t^{(\boldsymbol{k})} = (-iH+z_tL)\psi_t^{(\boldsymbol{k})} + L\sum_{\substack{j=1 \\ \boldsymbol{k}-\boldsymbol{e}_j\in\mathcal{K}^r_N}}^{r}k_jV_j(t)\psi_t^{(\boldsymbol{k}-\boldsymbol{e}_j)} - L^{\dagger}\sum_{\substack{j=1\\ \boldsymbol{k}+\boldsymbol{e}_j\in\mathcal{K}^r_N}}^{r}\lambda_jV_j^*(t)\psi_t^{(\boldsymbol{k}+\boldsymbol{e}_j)},
    \label{eq:finite_hierarchy}
\end{equation}
for $\boldsymbol{k}\in\mathcal{K}^r_N$, with the initial condition
\begin{equation*}
    \psi_0^{(\boldsymbol{0})} =\psi_0,\quad \text{and} \quad  \psi_0^{(\boldsymbol{k})}=0,\quad \forall\boldsymbol{k}\in\mathcal{K}^r_N, |\boldsymbol{k}|>0.
\end{equation*}


\section{Convergence of the hierarchical framework}
\label{sec:convergence}

In this section, we establish convergence of the proposed hierarchical framework via a two-stage argument. We first introduce the following uniqueness assumption:
\begin{enumerate}[label=(\Alph*), ref=(\Alph*), leftmargin=3em]
    \item \label{assum:exist_uniq}
    For every initial condition $\psi_0\in\mathcal{H}$, the NMSSE \eqref{eq:NMSSE} admits at most one mild solution on $[0,T]$. More precisely, if $\psi$ and $\phi$ are two $\mathcal{F}_t^{z,\psi_0}$-adapted $\mathcal{H}$-valued processes with continuous sample paths satisfying \eqref{eq:mild_condition_a}, \eqref{eq:mild_condition_b} and \eqref{eq:mild_formulation} with the same initial condition and driven by the same noise, then
    \begin{equation*}
        \mathbb{P}\left(\sup_{t\in[0,T]}\left\|\psi_t-\phi_t\right\|_{\mathcal{H}}=0\right) = 1.
    \end{equation*}
\end{enumerate}
Existence of the mild solution will be established via an infinite-series construction in the following analysis. The main result is stated in the next theorem:
\begin{theorem}\label{thm:main_convergence}
    Let $\sum_{j=1}^{r}\lambda_jV_j^*(t)V_j(s)$ be a rank-$r$ approximation of $\alpha(t,s)$. Under Assumption \ref{assum:exist_uniq}, there exist 
    $r_0>0$ such that for all $r>r_0$ and $N>0$,
    \begin{equation*}
       \mathbb{E}\left(\sup_{t\in[0,T]}\left\|\psi_t - \psi_t^{r,N}\right\|_{\mathcal{H}}\right) \leq C_1\varepsilon_r + C_2\sqrt{\frac{(\beta r)^N}{N!}},
    \end{equation*}
    where $C_1, C_2, \beta>0$ are generic constants which only depend on $\alpha$ and $T$, $\psi_t$ is the mild solution of \eqref{eq:NMSSE}, $\psi_t^{r,N}$ denotes the mild solution $\psi_t^{(\boldsymbol{0})}$ of \eqref{eq:finite_hierarchy} with rank $r$ and truncation order $N$, and 
    \begin{equation}
        \varepsilon_r \coloneq \sqrt{\sum_{j=r+1}^{\infty}\lambda_j^2} = o\left(r^{-\frac{1}{2}}\right).
    \label{eq:epsilon_r}
    \end{equation}
\end{theorem}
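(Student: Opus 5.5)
We split the error through the intermediate object $\psi^{r}_t$, the mild solution of the rank-$r$ NMSSE \eqref{eq:low_rank_decomp_NMSSE} driven by a noise $z^{r}$ with covariance $\alpha_r(t,s)=\sum_{j\le r}\lambda_jV_j^*(t)V_j(s)$:
\[
\mathbb{E}\sup_t\|\psi_t-\psi^{r,N}_t\|\le \mathbb{E}\sup_t\|\psi_t-\psi^{r}_t\|+\mathbb{E}\sup_t\|\psi^{r}_t-\psi^{r,N}_t\|.
\]
We couple the two noises through a common Karhunen--Lo\`eve expansion. Take $z_t=\sum_j\sqrt{\lambda_j}\,\xi_jV_j^*(t)$ with i.i.d.\ standard complex Gaussians $\xi_j$, and set $z^r$ equal to the truncated sum. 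This gives $\mathbb{E}|z_t-z^r_t|^2=\sum_{j>r}\lambda_j|V_j(t)|^2$.

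\textbf{Step 1: series representation and existence.} The key device is an explicit solution as a Dyson/chaos-type series. In the interaction picture \eqref{eq:NMSSE_interaction}, iterate the mild formulation. Expanding in powers of $z$ and in the functional derivative produces a sum over ordered times $0<s_1<\dots<s_n<t$ of products of $\widetilde L(s_i)$, $\widetilde L^\dagger(s_i)$, noise factors $z_{s_i}$, and contractions by $\alpha(s_i,s_j)$. Equivalently, one can write
\[
\widetilde\psi_t=\mathcal{T}\exp\!\Big(\int_0^t z_s\widetilde L(s)\,ds-\int_0^t\!\!\int_0^s\alpha(s,u)\widetilde L^\dagger(s)\widetilde L(u)\,du\,ds\Big)\widetilde\psi_0
\]
as a Wick-ordered series. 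We bound the terms using $\|\widetilde L\|=\|L\|$ and $\|\alpha\|_\infty$, together with Gaussian moment bounds $\mathbb{E}|z|^{2n}\le n!\,\|\alpha\|_\infty^n$. This shows absolute convergence in $L^2(\Omega;C([0,T],\mathcal H))$ and verifies \eqref{eq:mild_condition_a}--\eqref{eq:mild_formulation}. The same construction works for $\alpha_r$. Assumption~\ref{assum:exist_uniq} then identifies this series with $\psi_t$ and $\psi^r_t$.

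\textbf{Step 2: low-rank error.} The series for $\psi$ and $\psi^r$ is multilinear in the pair (noise, kernel). Telescoping term by term, each term of order $n$ differs by at most $n$ replacements of either $z$ by $z-z^r$ or $\alpha$ by $\alpha-\alpha_r$. The relevant sizes are controlled by $\|\alpha-\alpha_r\|_{L^2([0,T]^2)}=\varepsilon_r$. For the noise, by Cauchy--Schwarz in time, $\mathbb{E}\int|z-z^r|^2=\sum_{j>r}\lambda_j$. Here the $L^1$-type quantity must be handled carefully: integrating the $\alpha$-contractions in $L^2$ over the time simplex reduces everything to $\varepsilon_r$ times factorially summable constants. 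The rate $\varepsilon_r=o(r^{-1/2})$ follows from $\sum\lambda_j<\infty$ (trace class by Mercer) and monotonicity, since $r\lambda_r^2\le\lambda_r\sum_{j\le r}\lambda_j\to 0$ gives a tail bound. The threshold $r_0$ is chosen so that $\alpha_r$ is uniformly close to $\alpha$, which makes all constants uniform in $r$.

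\textbf{Step 3: truncation error.} For fixed $r$, we show that the infinite hierarchy's $\psi^{(\boldsymbol k)}_t=D^{\boldsymbol k}\psi^r_t$ is given by differentiating the series, and has the bound
\[
\mathbb{E}\|\psi^{(\boldsymbol k)}_t\|^2\le C\,\frac{(c t)^{|\boldsymbol k|}}{\boldsymbol k!}\prod_j\|V_j\|^{2k_j}.
\]
This holds because $D_j$ acting on $n$ noise factors produces $n$ contractions with $\int V_j z$, each costing a time integral. The difference $e^{(\boldsymbol k)}=\psi^{(\boldsymbol k)}-\psi^{(\boldsymbol k),N}$ satisfies the truncated system \eqref{eq:finite_hierarchy}, forced only at level $|\boldsymbol k|=N$ by $-L^\dagger\sum_j\lambda_jV_j^*\psi^{(\boldsymbol k+\boldsymbol e_j)}$. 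A weighted energy estimate on $\sum_{\boldsymbol k}w_{\boldsymbol k}\|e^{(\boldsymbol k)}\|^2$ with weights $w_{\boldsymbol k}\propto\prod\lambda_j^{k_j}/\boldsymbol k!$ then closes the argument. These weights make the up- and down-couplings antisymmetric, so they nearly cancel, in the spirit of the HOPS norm. Gr\"onwall's inequality then bounds the error by the level-$(N+1)$ forcing. The number of multi-indices with $|\boldsymbol k|=N+1$ is $\binom{N+r}{r-1}\le (er)^{N}$-ish, and combining this with the $1/(N+1)!$ from the bound above gives $\sqrt{(\beta r)^N/N!}$. A Doob/BDG-free argument handles the supremum in time: since there is no It\^o integral, we take sup inside the pathwise Gr\"onwall inequality and then use Cauchy--Schwarz in expectation.

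\textbf{Main obstacle.} I expect the main obstacle to be Step 3's weighted energy estimate. The term $z_tL$ is unbounded in $\omega$ and couples non-antisymmetrically, and the $k_jV_j$ versus $\lambda_jV_j^*$ couplings are not exactly adjoint, since $\|V_j\|$ and $\lambda_j$ are normalized differently. If exact cancellation fails, I would first try to absorb the discrepancy into Gr\"onwall with a factor $e^{\int|z|}$ and control $\mathbb{E}e^{c\int|z|}$ by Gaussian integrability. Failing that, I would bypass the energy estimate entirely and bound the truncation error directly as the tail of the series from Step 1, restricted to terms of chaos order $>N$.
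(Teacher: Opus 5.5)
\textbf{The gap is in Step 2, and it comes from your choice of coupling.} You drive $\psi^r$, and therefore $\psi^{r,N}$, by the Karhunen--Lo\`eve truncation $z^r$. In the statement, $\psi^{r,N}$ solves \eqref{eq:finite_hierarchy} with the \emph{same} path $z$ (covariance $\alpha$) that drives $\psi$; only the memory kernel is replaced by $\widehat\alpha_r$. This is consistent because the hierarchy uses nothing about the law of $z$ beyond the pathwise identity $D_jz_t=V_j(t)$. With your coupling, the error picks up noise-replacement terms, and these are not $O(\varepsilon_r)$. Your own computation $\mathbb{E}\int_0^T|z_s-z^r_s|^2\diff s=\sum_{j>r}\lambda_j$ already involves the trace tail, not $\varepsilon_r^2=\sum_{j>r}\lambda_j^2$. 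Concretely, take $\mathcal H=\mathbb C$, $H=0$, $L=1$. Then $\psi_t=\exp\{\int_0^tz_s\diff s-\int_0^t\int_0^s\alpha(s,u)\diff u\diff s\}\psi_0$, and your $\psi^r_t$ is the same expression with $(z^r,\widehat\alpha_r)$. The variable $Y_t=\int_0^t(z_s-z^r_s)\diff s$ is centered and independent of $z^r$. Hence $\mathbb{E}|\psi_t-\psi^r_t|$ is, to leading order, at least a constant times $(\sum_{j>r}\lambda_j|\int_0^tV_j|^2)^{1/2}$. For a smooth kernel with geometrically decaying $\lambda_j$, this is typically $\sqrt{\lambda_{r+1}}$ up to polynomial factors, far larger than $\varepsilon_r\asymp\lambda_{r+1}$. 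So your claim that the $L^2$ bookkeeping ``reduces everything to $\varepsilon_r$'' is not merely unproved: it is false for the noise part, and Step 2 cannot give $C_1\varepsilon_r$.

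\textbf{What works instead, and a correction to Step 3.} Keep a single noise $z$. Then $\widetilde\psi^{(n,m)}_t$ and $\widetilde\psi^{r(n,m)}_t$ share all $z$-integrals and differ only in the $m$ kernel factors. Telescoping, the $2m$-fold integral factorizes over disjoint pairs and is at most $\frac{1}{m!2^m}[(A+T\varepsilon_r)^m-A^m]$. Here $A=\int_0^T\int_0^T|\alpha|$, and $\int_0^T\int_0^T|\alpha-\widehat\alpha_r|\le T\varepsilon_r$ follows from Cauchy--Schwarz and Mercer's $L^2$ identity. It is this pathwise $L^1$ bookkeeping, not $\|\alpha\|_\infty$ or Gaussian moment bounds, that makes $\varepsilon_r$ appear. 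Summing over $n,m$ gives $e^{\|L\|\int_0^T|z_s|\diff s}(e^{\|L\|^2(A+T\varepsilon_r)/2}-e^{\|L\|^2A/2})\|\psi_0\|_{\mathcal H}$. Once $\varepsilon_r\le 1$ (the role of $r_0$), this is at most $Ce^{\|L\|\int_0^T|z_s|\diff s}\varepsilon_r\|\psi_0\|_{\mathcal H}$. An exponential-moment bound for $\int_0^T|z_s|\diff s$, via Jensen in time and the Rayleigh law of $|z_s|$, finishes.

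Your Step 3 has the paper's architecture, and the cancellation you worry about is exact. With $w_{\boldsymbol k}=\prod_j\lambda_j^{k_j}/k_j!$ one has $w_{\boldsymbol k}k_\ell=w_{\boldsymbol k-\boldsymbol e_\ell}\lambda_\ell$, and $\langle\phi,V_\ell^*\widetilde L^\dagger\psi\rangle=\langle\psi,V_\ell\widetilde L\phi\rangle^*$. So only $z_t\widetilde L$ survives in the energy identity. However, your bound on $\psi^{(\boldsymbol k)}_t$ carrying a factor $1/\boldsymbol k!$ is false: in the scalar example, $D_j^k\psi_t=(\int_0^tV_j)^k\psi_t$. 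The correct pathwise bound is $\|\prod_jD_j^{k_j}\psi^r_t\|\le\|L\|^{|\boldsymbol k|}\prod_j(\int_0^t|V_j|)^{k_j}\mathcal M^r_t\|\psi_0\|$. The factorial enters only through the weights, via $\lambda_j(\int_0^T|V_j|)^2\le T\int_0^T\alpha(s,s)\diff s$ and the identity $\sum_{|\boldsymbol k|=N}\prod_j\beta^{k_j}/k_j!=(\beta r)^N/N!$. It does not come from counting multi-indices against $1/(N+1)!$. Finally, your tail argument bounds $\lambda_r$, not $\varepsilon_r$. Use instead $\varepsilon_r^2\le\lambda_{r+1}\sum_{j>r}\lambda_j\le\frac{1}{r+1}\big(\sum_j\lambda_j\big)\sum_{j>r}\lambda_j=o(1/r)$.
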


The proof of \cref{thm:main_convergence} is based on the decomposition
\begin{equation*}
    \mathbb{E}\left(\left\|\psi_t - \psi_t^{r,N}\right\|_{\mathcal{H}}\right) \leq \mathbb{E}\left(\left\|\psi_t - \psi_t^{r}\right\|_{\mathcal{H}}\right) + \mathbb{E}\left(\left\|\psi_t^{r} - \psi_t^{r,N}\right\|_{\mathcal{H}}\right),
\end{equation*}
where $\psi_t^{r}$ denotes the mild solution of \eqref{eq:low_rank_decomp_NMSSE}.
In the first part, we estimate the error between the exact NMSSE solution and the solution associated with the low-rank BCF approximation. Under Assumption \ref{assum:exist_uniq}, we show that this perturbation vanishes as the rank increases. In the second part, we analyze the truncation error induced by retaining only finitely many hierarchy levels, and prove convergence to the infinite-hierarchy solution as the truncation order tends to infinity. Combining the two estimates yields convergence of the proposed framework to the target dynamics. The specific details of these derivations are provided in the following two subsections.

\subsection{Convergence of the low-rank approximation}
To establish the convergence of the solution to the NMSSE under the low-rank BCF approximation, we first derive an infinite-series representation of the mild solution.

\begin{theorem}\label{thm:analytical_solution}
    The mild solution of \eqref{eq:NMSSE} is given by the infinite series
    \begin{equation}
    \begin{aligned}
        \psi_t =& S(t)\sum_{n=0}^{\infty}\sum_{m=0}^{\infty}\widetilde{\psi}_t^{(n,m)} \\
        \coloneq& S(t)\sum_{n=0}^{\infty}\sum_{m=0}^{\infty}\int_{0<\tau_1<\cdots<\tau_n<t}\diff^{n}\boldsymbol{\tau}z_{\tau_n}\cdots z_{\tau_1} \\
        &\qquad\quad \int_{0<s_1<\cdots<s_{2m}<t}\diff^{2m}\boldsymbol{s} (-1)^m\sum_{\boldsymbol{q}\in\mathscr{Q}(\boldsymbol{s})}\alpha(q_{2m},q_{2m-1})\cdots\alpha(q_2,q_1) \\
        &\qquad\quad \mathscr{T}\left\{\widetilde{L}(\tau_n)\cdots\widetilde{L}(\tau_1)\widetilde{L}^{\dagger}(q_{2m})\widetilde{L}(q_{2m-1})\cdots\widetilde{L}^{\dagger}(q_2)\widetilde{L}(q_1)\right\}\psi_0,
    \label{eq:infinite_series_solution}
    \end{aligned}
    \end{equation}
    where $\mathscr{T}$ is the time-ordering operator, $\widetilde{L}(\tau)\coloneqq S(-\tau)LS(\tau)$ is the interaction picture of $L$, and $\mathscr{Q}(\boldsymbol{s})=\mathscr{Q}(s_{1},\cdots,s_{2m})$ denotes the set of all possible ordered pairings of $\{s_1,\cdots,s_{2m}\}$. For example,
    \begin{equation*}
    \begin{aligned}
        \mathscr{Q}(s_1,s_2) =& \big\{\{(s_1,s_2)\}\big\}, \\
        \mathscr{Q}(s_1,s_2,s_3,s_4) =& \big\{\{(s_1,s_2),(s_3,s_4)\}, \{(s_1,s_3),(s_2,s_4)\}, \{(s_1,s_4),(s_2,s_3)\}\big\}.
    \end{aligned}
    \end{equation*}
    The first element of each pair is less than the second. If $\boldsymbol{q}$ is equal to some set of ordered pairings, it indicates that the elements of $\boldsymbol{q}$ are, in order, equal to all the elements in that set. For example, $\boldsymbol{q}=\{(s_1,s_4),(s_2,s_3)\}$ implies $q_1=s_1, q_2=s_4, q_3=s_2, q_4=s_3$.
    
    The infinite series \eqref{eq:infinite_series_solution} absolutely converges a.s. and has an upper bound
    \begin{equation}
    \begin{aligned}
        \left\|\psi_t\right\|_{\mathcal{H}}\leq& \mathcal{M}_t\|\psi_0\|_{\mathcal{H}}\\ 
        \coloneq& \exp\left\{\|L\|_{\mathcal{H}}\int_{0}^t|z_s|\diff s + \frac{\|L\|^2_{\mathcal{H}}}{2}\int_0^t\int_0^t|\alpha(u,v)|\diff u\diff v\right\}\|\psi_0\|_{\mathcal{H}}.
    \label{eq:upper_bound_inf_series}
    \end{aligned}
    \end{equation}
\end{theorem}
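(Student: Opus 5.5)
We work throughout in the interaction picture of Remark~\ref{rmk:strong_mild}. By that equivalence it suffices to show that the series $\widetilde{\psi}_t\coloneqq\sum_{n,m\ge0}\widetilde{\psi}_t^{(n,m)}$ converges absolutely, obeys the bound \eqref{eq:upper_bound_inf_series}, and is a strong solution of \eqref{eq:strong_solution}. Assumption~\ref{assum:exist_uniq} then identifies $S(t)\widetilde{\psi}_t$ with \emph{the} mild solution.

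\textbf{Step 1: absolute convergence and the bound.} We estimate each term in norm. Since $S$ is unitary, $\|\widetilde{L}(\tau)\|=\|\widetilde{L}^\dagger(\tau)\|=\|L\|$, so the time-ordered product of $n+2m$ operators has norm at most $\|L\|^{n+2m}$.

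For the noise part, the ordered simplex integral is bounded by
\[
\int_{0<\tau_1<\cdots<\tau_n<t}|z_{\tau_n}|\cdots|z_{\tau_1}|\,\diff^n\boldsymbol\tau=\frac{1}{n!}\Bigl(\int_0^t|z_s|\diff s\Bigr)^n .
\]

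For the memory part, the sum over ordered pairings of an ordered $2m$-tuple $s_1<\cdots<s_{2m}$, integrated over the simplex, equals the integral over $[0,t]^{2m}$ of a product of $m$ factors $|\alpha(u_i,v_i)|$ with $u_i<v_i$, divided by $m!$. The division accounts for the permutations of the pairs. Hence
\[
\int_{0<s_1<\cdots<s_{2m}<t}\sum_{\boldsymbol q\in\mathscr Q(\boldsymbol s)}\prod_{i=1}^m|\alpha(q_{2i},q_{2i-1})|\,\diff^{2m}\boldsymbol s\le \frac{1}{m!}\Bigl(\int\!\!\int_{u<v}|\alpha|\Bigr)^m\le\frac{1}{m!}\Bigl(\tfrac12\int_0^t\!\!\int_0^t|\alpha|\Bigr)^m ,
\]
where the last inequality uses the Hermitian symmetry $|\alpha(u,v)|=|\alpha(v,u)|$. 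Summing over $n$ and $m$ gives the two exponentials in \eqref{eq:upper_bound_inf_series}. The bound is finite a.s. because $z$ has continuous paths, so convergence holds a.s.

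\textbf{Step 2: the series solves the equation.} Write $\widetilde{\psi}_t$ as a functional $\mathscr{M}_t(\psi_0,z)$. For the drift term, differentiating in $t$ picks out the latest time variable. If that variable is a $\tau_n=t$, the term reproduces $z_t\widetilde{L}(t)\widetilde{\psi}^{(n-1,m)}_t$. If it is $s_{2m}=t$, it must be the second element of some pair $(q,t)$; this produces $-\widetilde{L}^\dagger(t)\int_0^t\alpha(t,q)\cdots$.

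For the functional derivative, we compute the Gâteaux derivative of $\widetilde{\psi}^{(n,m)}_t$ in direction $\alpha(t,\cdot)$. This replaces one $z_{\tau_k}$ by $\alpha(t,\tau_k)$, giving $n$ terms, each of which maps $(n,m-1)$-type structure into a new pair $(\tau_k,t)$ with an $\widetilde{L}$ at $\tau_k$. Comparing combinatorially, we show
\[
\partial_t\widetilde{\psi}_t=z_t\widetilde{L}(t)\widetilde{\psi}_t-\widetilde{L}^\dagger(t)\mathcal{R}_t\widetilde{\psi}_t
\]
term by term. The key identity is that a pairing $\boldsymbol q$ with the top element $t$ paired to $q$ corresponds exactly to differentiating a term of order $(n+1,m-1)$ with respect to $z_q$ in direction $\alpha(t,q)$. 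Time ordering is preserved because all variables are less than $t$.

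Termwise differentiation and differentiation in $\varepsilon$ are justified by the dominated bounds of Step 1. These bounds are uniform for $\varepsilon$ in a neighborhood of $0$ since $\alpha$ is continuous, so Step 1 also yields \eqref{eq:mild_condition_a}, \eqref{eq:mild_condition_b} and continuity of the sample paths. Integrating in time gives \eqref{eq:strong_solution}, and uniqueness finishes the proof.

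\textbf{Main obstacle.} We expect the main difficulty to be the bookkeeping in Step 2: matching the $(-1)^m$ signs and pairing multiplicities, and showing that the Gâteaux derivative of the $z$-product exactly accounts for all pairings whose top element is $t$. We would first verify the matching for $m\le2$, then formalize it as a bijection between pairings of $\{s_1,\dots,s_{2m}\}$ with $s_{2m}=t$ and pairs consisting of a chosen index $\tau_k$ and a pairing of the remaining $2m-2$ variables.
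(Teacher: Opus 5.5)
Your proposal is correct and takes essentially the same route as the paper. Step 1 uses termwise operator-norm bounds plus a symmetrization count of the pairings; your $m!$-to-one map onto pairs with $u_i<v_i$ is equivalent to the paper's full-cube count $(2m-1)!!/(2m)!=1/(2^m m!)$. Step 2 is termwise differentiation in the interaction picture, and your key identity is exactly the paper's claim $\mathcal{D}_{\boldsymbol{s}}^{(n,m)}=-\widetilde{L}^{\dagger}(t)\mathcal{R}_t\widetilde{\psi}_t^{(n+1,m-1)}$, obtained by merging the freed variable into the $(2m-1)$-simplex. Your explicit appeal to Assumption~\ref{assum:exist_uniq} (to identify the series with \emph{the} mild solution) and your analyticity-in-$\varepsilon$ justification of the G\^{a}teaux derivative are points the paper leaves implicit.
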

\begin{proof}
    The verification that \eqref{eq:infinite_series_solution} satisfies \eqref{eq:mild_condition_a}, \eqref{eq:mild_condition_b} and \eqref{eq:mild_formulation} is deferred to the supplementary materials. Here we prove absolute convergence of the series in \eqref{eq:infinite_series_solution}, i.e.,
    \begin{equation*}
        \sum_{n=0}^{\infty}\sum_{m=0}^{\infty}\left\|\widetilde{\psi}_t^{(n,m)}\right\|_{\mathcal{H}} < \infty
    \end{equation*}
    for $t\in[0,T]$ a.s.. Note that $\|\widetilde{L}(t)\|_{\mathcal{H}}=\|S(-t)LS(t)\|_{\mathcal{H}}=\|L\|_{\mathcal{H}}$. Hence,
    \begin{equation}
    \begin{aligned}
        \left\|\widetilde{\psi}_t^{(n,m)}\right\|_{\mathcal{H}} \leq& \int_{0<\tau_1<\cdots<\tau_n<t}\diff^n\boldsymbol{\tau}\left|z_{\tau_n}\cdots z_{\tau_1}\right|\int_{0<s_1<\cdots<s_{2m}<t}\diff^{2m}\boldsymbol{s} \\
        & \sum_{\boldsymbol{q}\in\mathscr{Q}(\boldsymbol{s})}\left|\alpha(q_{2m},q_{2m-1})\cdots\alpha(q_2,q_1)\right|\cdot \|L\|_{\mathcal{H}}^{n+2m} \cdot\|\psi_0\|_{\mathcal{H}}.
    \label{eq:absolute_converg_estimate}
    \end{aligned}
    \end{equation}
    We now rewrite the two integrals over the cubes $[0,t]^n$ and $[0,t]^{2m}$. By symmetry,
    \begin{equation}
        \int_{0<\tau_1<\cdots<\tau_n<t}\diff^n \boldsymbol{\tau}\left|z_{\tau_n}\cdots z_{\tau_1}\right|\cdot\|L\|_{\mathcal{H}}^n = \frac{1}{n!}\left(\|L\|_{\mathcal{H}}\int_0^t|z_s|\diff s\right)^n.
    \label{eq:4_8}
    \end{equation}
    Let $f(\boldsymbol{s})\coloneqq\sum_{\boldsymbol{q}\in\mathscr{Q}(\boldsymbol{s})}|\alpha(q_{2m},q_{2m-1})\cdots\alpha(q_2,q_1)|$. Since $\alpha(t,s)=\alpha^*(s,t)$, we have $|\alpha(t,s)|=|\alpha(s,t)|$, and thus $f(\boldsymbol{s})$ is symmetric under permutations of $s_1,\cdots,s_{2m}$. Because $\mathscr{Q}(s_1,\cdots,s_{2m})$ contains $(2m-1)!!$ elements, we obtain
    \begin{equation}
    \begin{aligned}
        \int_{0<s_1<\cdots<s_{2m}<t}\diff^{2m}\boldsymbol{s}f(\boldsymbol{s}) \|L\|_{\mathcal{H}}^{2m}
        =& \frac{1}{(2m)!}\int_{[0,t]^{2m}}\diff^{2m}\boldsymbol{s}f(\boldsymbol{s})\|L\|_{\mathcal{H}}^{2m} \\
        =& \frac{(2m-1)!!}{(2m)!}\left(\int_0^t\int_0^t\left|\alpha(u,v)\right|\diff u\diff v\right)^m\|L\|_{\mathcal{H}}^{2m} \\
        =& \frac{1}{m!}\left(\frac{\|L\|_{\mathcal{H}}^2}{2}\int_0^t\int_0^t|\alpha(u,v)|\diff u\diff v\right)^{m}.
    \label{eq:4_9}
    \end{aligned}
    \end{equation}
    Substituting \eqref{eq:4_8} and \eqref{eq:4_9} into \eqref{eq:absolute_converg_estimate} yields
    \begin{equation}
        \left\|\widetilde{\psi}_t^{(n,m)}\right\|_{\mathcal{H}} \leq \frac{1}{n!}\left(\|L\|_{\mathcal{H}}\int_0^t|z_s|\diff s\right)^n \frac{1}{m!}\left(\frac{\|L\|_{\mathcal{H}}^2}{2}\int_0^t\int_0^t|\alpha(u,v)|\diff u\diff v\right)^m  \|\psi_0\|_{\mathcal{H}}.
    \label{eq:4_10}
    \end{equation}
    Summing over $n,m\geq 0$ yields
    \begin{equation*}
    \begin{aligned}
        & \sum_{n=0}^{\infty}\sum_{m=0}^{\infty}\left\|\widetilde{\psi}_t^{(n,m)}\right\|_{\mathcal{H}} \\
        &\qquad\leq \exp\left\{\|L\|_{\mathcal{H}}\int_{0}^t|z_s|\diff s + \frac{\|L\|^2_{\mathcal{H}}}{2}\int_0^t\int_0^t|\alpha(u,v)|\diff u\diff v\right\}\|\psi_0\|_{\mathcal{H}} \\
        &\qquad< \infty,\quad \text{a.s.},
    \end{aligned}
    \end{equation*}
    which proves \eqref{eq:upper_bound_inf_series}.
\end{proof}

The following lemma provides an essential estimate for $\mathbb{E}(\exp\{\int_0^t|z_s|\diff s\})$, which will be utilized in the subsequent analysis.
\begin{lemma}\label{lemma:z_expect_estimate}
    Let $z$ be the Gaussian process satisfying \eqref{eq:noise_condition}. It holds that
    \begin{equation*}
        \mathbb{E}\left(\exp\left\{\int_0^t|z_s|\diff s\right\}\right) \leq \int_0^t \sqrt{\pi\alpha(s,s)}\exp\left\{\frac{\alpha(s,s)t^2}{4}\right\}\diff s + 1.
    \end{equation*}
\end{lemma}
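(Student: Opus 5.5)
The plan is to reduce the path functional to one-dimensional marginals using Jensen's inequality, and then compute the exponential moment of $|z_s|$ exactly from its Rayleigh distribution. The case $t=0$ is trivial (both sides equal $1$), so assume $t>0$.

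\textbf{Step 1: Jensen in time.} Write
\begin{equation*}
\int_0^t|z_s|\diff s=\frac1t\int_0^t t|z_s|\diff s .
\end{equation*}
Since $\frac{1}{t}\diff s$ is a probability measure on $[0,t]$ and $\exp$ is convex, Jensen's inequality gives pathwise
\begin{equation*}
\exp\left\{\int_0^t|z_s|\diff s\right\}\le \frac1t\int_0^t e^{t|z_s|}\diff s .
\end{equation*}
Taking expectations and using Tonelli, which applies because the integrand is nonnegative and $z$ has continuous paths, we get
\begin{equation*}
\mathbb{E}\left(\exp\left\{\int_0^t|z_s|\diff s\right\}\right)\le \frac1t\int_0^t\mathbb{E}\left(e^{t|z_s|}\right)\diff s .
\end{equation*}

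\textbf{Step 2: law of $|z_s|$.} Fix $s$ and set $\sigma^2=\alpha(s,s)$. By \eqref{eq:noise_condition}, $\mathbb{E}(z_s^2)=0$ and $\mathbb{E}|z_s|^2=\sigma^2$. Hence the real and imaginary parts of $z_s$ are centered jointly Gaussian, uncorrelated (so independent), and each has variance $\sigma^2/2$. Consequently $|z_s|^2$ is exponential with mean $\sigma^2$, that is,
\begin{equation*}
\mathbb{P}(|z_s|>\rho)=e^{-\rho^2/\sigma^2}.
\end{equation*}
If $\sigma=0$ then $z_s=0$ a.s., so $\mathbb{E}(e^{t|z_s|})=1$, which is consistent with the bound below.

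\textbf{Step 3: exponential moment.} For $a>0$ and $R=|z_s|$, the layer-cake formula gives
\begin{equation*}
\mathbb{E}\left(e^{aR}\right)=1+a\int_0^\infty e^{a\rho}\,\mathbb{P}(R>\rho)\diff \rho=1+a\int_0^\infty e^{a\rho-\rho^2/\sigma^2}\diff\rho .
\end{equation*}
Completing the square and extending the integral to all of $\mathbb{R}$ yields
\begin{equation*}
\int_0^\infty e^{a\rho-\rho^2/\sigma^2}\diff\rho\le \sqrt{\pi\sigma^2}\,e^{a^2\sigma^2/4}.
\end{equation*}
With $a=t$ this gives
\begin{equation*}
\mathbb{E}\left(e^{t|z_s|}\right)\le 1+t\sqrt{\pi\alpha(s,s)}\,e^{\alpha(s,s)t^2/4}.
\end{equation*}

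\textbf{Step 4: conclusion.} Insert the bound from Step 3 into Step 1. The factor $t$ cancels against the prefactor $\frac1t$, and the constant term contributes $\frac1t\int_0^t 1\,\diff s=1$. This gives exactly the claimed estimate.

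The only point needing care is Step 2: the vanishing pseudo-covariance is essential, because it makes the real and imaginary parts i.i.d. and hence $|z_s|$ exactly Rayleigh. Everything else is routine.
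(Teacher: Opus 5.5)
Your proposal is correct and follows essentially the same route as the paper: Jensen's inequality in time, Tonelli, the Rayleigh law of $|z_s|$, and the bound $\mathbb{E}\left(e^{t|z_s|}\right)\le 1+t\sqrt{\pi\alpha(s,s)}\,e^{\alpha(s,s)t^2/4}$. The paper computes this moment exactly, with a factor $\Phi(\cdot)\le 1$, which your completed-square extension to $\mathbb{R}$ reproduces. Your explicit handling of $t=0$ and of $\alpha(s,s)=0$ is a small improvement over the paper, whose Rayleigh density is undefined in the latter case.
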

\begin{proof}
    By the convexity of the exponential function, Jensen's inequality implies
    \begin{equation*}
        \exp\left\{\int_0^t|z_s|\diff s\right\} = \exp\left\{\frac{1}{t}\int_0^tt|z_s|\diff s\right\} \leq \frac{1}{t}\int_0^te^{t|z_s|}\diff s.
    \end{equation*}
    Taking expectation gives
    \begin{equation}
        \mathbb{E}\left(\exp\left\{\int_0^t|z_s|\diff s\right\}\right) \leq \frac{1}{t}\mathbb{E}\left(\int_0^te^{t|z_s|}\diff s\right) = \frac{1}{t}\int_0^t\mathbb{E}\left(e^{t|z_s|}\right)\diff s,
        \label{eq:estimate_414}
    \end{equation}
    where exchanging expectation and integration follows from Tonelli's theorem. Note that $|z_s|$ follows a Rayleigh distribution with density
    \begin{equation*}
        f(r) = \frac{2r}{\alpha(s,s)}e^{-r^2/\alpha(s,s)},\qquad r\geq 0.
    \end{equation*}
    A direct calculation gives
    \begin{equation}
    \begin{aligned}
    \label{eq:estimate_416}
        \mathbb{E}\left(e^{t|z_s|}\right) 
        =& 1+\sqrt{\pi\alpha(s,s)}te^{\alpha(s,s)t^2/4}\Phi\left(\sqrt{\frac{\alpha(s,s)}{2}}t\right) \\
        \leq& 1+\sqrt{\pi\alpha(s,s)}te^{\alpha(s,s)t^2/4},
    \end{aligned}
    \end{equation}
    where $\Phi$ is the cumulative distribution function of the standard normal distribution. Substituting \eqref{eq:estimate_416} into \eqref{eq:estimate_414} completes the proof.
\end{proof}

Let $\widehat{\alpha}_r(t,s)=\sum_{j=1}^r\lambda_jV_j^*(t)V_j(s)$ be the low-rank approximation of $\alpha(t,s)$ of rank $r$. The uniform convergence is given by the following lemma (see \cite{Riesz2012functional,Rasmussen2005GaussianProcess,Scholkopf2001LearningWithKernels}).
\begin{lemma}[Mercer's theorem]\label{lemma:Mercer}
    Let $\alpha(\cdot,\cdot)\in \mathcal{C}([0,T]^2,\mathbb{C})$ be positive semi-definite. Define the integral operator $\mathcal{I}:L^2([0,T],\mathbb{C})\to L^2([0,T],\mathbb{C})$ as 
    \begin{equation*}
        (\mathcal{I}f)(t) \coloneqq \int_0^T \alpha^*(t,s)f(s)\diff s.
    \end{equation*}
Let $V_j\in L^2([0,T],\mathbb{C})$ be the orthonormal eigenfunctions of $\mathcal{I}$ associated with the eigenvalues $\lambda_j\geq 0$, sorted in non-increasing order. Then
\begin{enumerate}
    \item the eigenvalues $\{\lambda_j\}_{j=1}^{\infty}$ are absolutely summable;
    \item $\alpha(t,s)=\sum_{j=1}^{\infty} \lambda_jV_j^*(t)V_j(s)$ holds for almost every $(t,s)\in[0,T]^2$, where the series converges absolutely and uniformly almost everywhere. And
    \begin{equation*}
        \int_0^T\int_0^T\Big|\alpha(t,s) - \sum_{j=1}^{r}\lambda_jV_j^*(t)V_j(s)\Big|^2\diff s\diff t = \sum_{j=r+1}^{\infty}\lambda_j^2.
    \end{equation*}
\end{enumerate}
\end{lemma}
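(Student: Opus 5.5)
The statement to prove is Mercer's theorem (\cref{lemma:Mercer}). I will follow the classical route: the spectral theorem for compact self-adjoint operators, then a diagonal/trace argument for uniform convergence.

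\textbf{Step 1: spectral setup.} Since $\alpha$ is continuous on the compact set $[0,T]^2$, the operator $\mathcal{I}$ is Hilbert--Schmidt, hence compact.
\begin{itemize}
\item From $\alpha(t,s)=\alpha^*(s,t)$, the kernel $\alpha^*(t,s)=\alpha(s,t)$ is Hermitian, so $\mathcal{I}$ is self-adjoint.
\item Positive semidefiniteness of $\alpha$ gives $\langle f,\mathcal{I}f\rangle\ge0$ for all $f$. This is the stated quadratic-form condition up to complex conjugation, which preserves nonnegativity.
\item The spectral theorem then yields an orthonormal system $\{V_j\}$ of eigenfunctions with eigenvalues $\lambda_j\ge0$, $\lambda_j\to0$.
\item For $\lambda_j>0$, write $V_j=\lambda_j^{-1}\mathcal{I}V_j$. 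This is an integral against a continuous kernel, so each such $V_j$ is continuous.
\end{itemize}

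\textbf{Step 2: positivity of the remainder.} Define $R_r(t,s)\coloneqq\alpha(t,s)-\sum_{j\le r}\lambda_jV_j^*(t)V_j(s)$.
\begin{itemize}
\item $R_r$ is continuous, and $R_r^*(t,s)$ is the kernel of $\mathcal{I}$ restricted to the orthogonal complement of $\mathrm{span}\{V_1,\dots,V_r\}$. That restriction is still positive semidefinite.
\item A continuous positive semidefinite kernel has a nonnegative diagonal. Otherwise, testing against an approximate delta bump near a point with $R_r(t,t)<0$ gives a negative quadratic form.
\item Hence $\sum_{j\le r}\lambda_j|V_j(t)|^2\le\alpha(t,t)$ for every $r$ and $t$.
\item Integrating over $[0,T]$ gives $\sum_j\lambda_j\le\int_0^T\alpha(t,t)\,\diff t<\infty$. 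This proves item 1.
\end{itemize}

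\textbf{Step 3: uniform convergence (the main obstacle).}
\begin{itemize}
\item By Cauchy--Schwarz,
\begin{equation*}
\Big|\sum_{j=p}^{q}\lambda_jV_j^*(t)V_j(s)\Big|^2\le\Big(\sum_{j=p}^{q}\lambda_j|V_j(t)|^2\Big)\Big(\sum_{j=p}^{q}\lambda_j|V_j(s)|^2\Big)\le\alpha(s,s)\sum_{j=p}^{q}\lambda_j|V_j(t)|^2 .
\end{equation*}
So for each fixed $t$ the series converges absolutely and uniformly in $s$.
\item Its limit $K_t(s)$ is continuous in $s$. For any $f$, $\int(\alpha(t,s)-K_t(s))f(s)\,\diff s=0$, because both sides have the same action on the eigenbasis and on $\ker\mathcal{I}$. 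Hence the limit equals $\alpha(t,\cdot)$ everywhere.
\item In particular $\sum_j\lambda_j|V_j(t)|^2=\alpha(t,t)$ pointwise.
\item This is a monotone sequence of continuous functions converging to a continuous limit on a compact set. Dini's theorem upgrades it to uniform convergence.
\item Inserting this into the Cauchy--Schwarz bound gives absolute and uniform convergence of the full series on $[0,T]^2$. This proves the expansion in item 2 (in fact everywhere, and in particular a.e.).
\end{itemize}
The delicate points are the pointwise identification of the limit and the use of Dini's theorem.

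\textbf{Step 4: the $L^2$ identity.} The functions $\{V_j^*(t)V_j(s)\}$ are orthonormal in $L^2([0,T]^2)$. Uniform convergence gives $R_r=\sum_{j>r}\lambda_jV_j^*(t)V_j(s)$ with convergence in $L^2$. Parseval then yields $\|R_r\|_{L^2}^2=\sum_{j>r}\lambda_j^2$.
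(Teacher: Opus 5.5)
Your proposal is correct. The paper gives no proof of its own for this lemma and defers to the classical references cited just before it; your argument is the standard Riesz--Sz.-Nagy proof found there: compactness and positivity of $\mathcal{I}$, nonnegativity of the diagonal of the positive remainder kernel giving $\sum_{j\le r}\lambda_j|V_j(t)|^2\le\alpha(t,t)$, pointwise identification of the limit, and Dini's theorem.

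One piece of bookkeeping in the identification step: since $\int_0^T\alpha(t,s)f(s)\,\diff s=\overline{(\mathcal{I}\bar f)(t)}$, you should test against $\overline{V_k}$ and against $\bar h$ with $h\in\ker\mathcal{I}$, using continuity of $\mathcal{I}h$ to get $(\mathcal{I}h)(t)=0$ at the fixed $t$. With that done, the continuous representatives $V_j=\lambda_j^{-1}\mathcal{I}V_j$ give the expansion everywhere and uniformly on $[0,T]^2$, which is slightly stronger than the almost-everywhere statement in the lemma.
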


Let $\psi_t$ and $\psi_t^r$ be the mild solutions of \eqref{eq:NMSSE} and \eqref{eq:low_rank_decomp_NMSSE}, respectively, with the same initial condition. In both equations, the noise $z$ is generated by $\alpha(t,s)$. The next theorem states convergence of the low-rank approximation.

\begin{theorem}\label{thm:low_rank_convergence}
    Under Assumption \ref{assum:exist_uniq}, there exists a constant $r_0>0$ such that for all $r>r_0$, we have
    \begin{equation}
        \mathbb{E}\left(\sup_{t\in[0,T]}\left\|\psi_t-\psi_t^{r}\right\|_{\mathcal{H}}\right) \leq C\varepsilon_r,
    \label{eq:converg_r}
    \end{equation}
    where $C$ is independent of $r$, and $\varepsilon_r$ is defined as \eqref{eq:epsilon_r}.
\end{theorem}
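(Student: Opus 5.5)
We compare the two infinite-series representations furnished by \cref{thm:analytical_solution}. Assumption \ref{assum:exist_uniq}, applied to both \eqref{eq:NMSSE} and \eqref{eq:low_rank_decomp_NMSSE}, identifies $\psi_t$ and $\psi_t^r$ with these series. Since $\widehat{\alpha}_r$ is Hermitian and positive semidefinite, the construction of \cref{thm:analytical_solution} applies verbatim with $\alpha$ replaced by $\widehat{\alpha}_r$ in the pairing factors. The noise $z$ is the same in both, so the $\tau$-integrals are identical. Hence
\begin{equation*}
\psi_t-\psi_t^r = S(t)\sum_{n,m}\int\diff^n\boldsymbol{\tau}\,z_{\tau_n}\cdots z_{\tau_1}\int\diff^{2m}\boldsymbol{s}\,(-1)^m\sum_{\boldsymbol{q}}\Big(\prod_{i}\alpha(q_{2i},q_{2i-1})-\prod_i\widehat{\alpha}_r(q_{2i},q_{2i-1})\Big)\mathscr{T}\{\cdots\}\psi_0 .
\end{equation*}
Telescope each product difference as a sum of $m$ terms, each containing exactly one factor $\alpha-\widehat{\alpha}_r$, with the remaining factors being $\alpha$ or $\widehat{\alpha}_r$.

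Next, repeat the symmetrization argument of \eqref{eq:4_9}. Write $A=\int_0^t\int_0^t|\alpha|$, $\widehat{A}=\int\int|\widehat{\alpha}_r|$, and $\Delta=\int_0^T\int_0^T|\alpha-\widehat{\alpha}_r|$. The $m$-th pairing block is then bounded by
\begin{equation*}
\frac{\|L\|^{2m}}{2^m m!}\,m\,\Delta\,\max(A,\widehat{A})^{m-1}.
\end{equation*}
Summing over $n,m$, and using that the bound is monotone in $t$ so that the supremum over $t$ is harmless, gives
\begin{equation*}
\sup_t\|\psi_t-\psi_t^r\|\le \tfrac{\|L\|^2}{2}\Delta\,\exp\Big\{\|L\|\int_0^T|z_s|\diff s+\tfrac{\|L\|^2}{2}\max(A,\widehat A)\Big\}\|\psi_0\|.
\end{equation*}
By Cauchy--Schwarz and \cref{lemma:Mercer}, $\Delta\le T\varepsilon_r$.

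The step needing care is making $\widehat A$ bounded independently of $r$; this is where $r_0$ enters. Since $\int\int|\widehat\alpha_r|\le A+\Delta\le A+T\varepsilon_r$ and $\varepsilon_r\to0$, choose $r_0$ so that $T\varepsilon_r\le 1$ for $r>r_0$. Then $\max(A,\widehat A)\le A+1$ uniformly in $r$.

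Finally take expectations. Assuming $\psi_0$ is bounded (or has a finite moment), independence of $\psi_0$ from $z$ lets us factor $\mathbb{E}\|\psi_0\|$. The remaining factor $\mathbb{E}\exp\{\|L\|\int_0^T|z_s|\diff s\}$ is finite by \cref{lemma:z_expect_estimate}, after rescaling $z$ by $\|L\|$, i.e.\ using covariance $\|L\|^2\alpha$. This yields \eqref{eq:converg_r} with $C$ depending only on $\alpha$, $T$, $\|L\|$ and $\psi_0$.

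The main obstacle I anticipate is justifying the termwise telescoping and the symmetrization when the mixed products of $\alpha$, $\widehat{\alpha}_r$ and $\alpha-\widehat{\alpha}_r$ are no longer a single symmetric function. My plan is to bound each telescoped term by a product of absolute values, which is symmetric up to relabeling the distinguished pair. Counting then gives the factor $m(2m-1)!!/(2m)!$, as above.
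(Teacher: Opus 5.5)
Your proposal is correct and follows the paper's proof essentially step for step: series representations from \cref{thm:analytical_solution}, telescoping of the pairing products, symmetrization giving the factor $1/(2^m m!)$, Cauchy--Schwarz with \cref{lemma:Mercer} for $\Delta\le T\varepsilon_r$, and \cref{lemma:z_expect_estimate} for the expectation; you also make explicit the choice of $r_0$, the rescaling of $z$ by $\|L\|_{\mathcal{H}}$, and the moment assumption on $\psi_0$, which the paper leaves implicit. In the symmetrization step, the position-dependent mix of $|\alpha|$ and $|\widehat{\alpha}_r|$ is not permutation-invariant, so it does not directly give $m\Delta\max(A,\widehat{A})^{m-1}$; instead, dominate every non-distinguished factor by the single symmetric kernel $|\alpha|+|\alpha-\widehat{\alpha}_r|$, which gives $m\Delta(A+\Delta)^{m-1}$ and leaves the rest of your argument (including the $r_0$ step) unchanged.
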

\begin{proof}
    It follows from \cref{lemma:Mercer} that
    \begin{equation*}
        \int_0^T\int_0^T\left|\alpha(t,s)-\widehat{\alpha}_r(t,s)\right|^2\diff t\diff s = \varepsilon_r^2.
    \end{equation*}
    Applying the Cauchy-Schwarz inequality, we obtain
    \begin{equation*}
        \int_0^T\int_0^T\left|\alpha(t,s)-\widehat{\alpha}_r(t,s)\right|\diff t\diff s \leq T\varepsilon_r.
    \end{equation*}
    According to \cref{thm:analytical_solution}, $\psi_t$ and $\psi_t^{r}$ admit the following series representations:
    \begin{equation*}
        \psi_t = S(t)\sum_{n=0}^{\infty}\sum_{m=0}^{\infty}\widetilde{\psi}_t^{(n,m)},
        \qquad
        \psi_t^{r} = S(t)\sum_{n=0}^{\infty}\sum_{m=0}^{\infty}\widetilde{\psi}_t^{r(n,m)}.
    \end{equation*}
    By exploiting the symmetry of the integrand, we have
    \begin{equation}
    \begin{aligned}
        g(t,\alpha,\widehat{\alpha}_r) &\coloneq \int_{0<s_1<\cdots<s_{2m}<t}\diff^{2m}\boldsymbol{s}\sum_{\boldsymbol{q}\in\mathscr{Q}(\boldsymbol{s})}\left|\prod_{j=1}^{m}\alpha(q_{2j},q_{2j-1})-\prod_{j=1}^{m}\widehat{\alpha}_{r}(q_{2j},q_{2j-1})\right| \\
        &= \frac{(2m-1)!!}{(2m)!} \int_{[0,t]^{2m}} \diff^{2m}\boldsymbol{s} \left|\prod_{j=1}^{m}\alpha(s_{2j},s_{2j-1})-\prod_{j=1}^{m}\widehat{\alpha}_{r}(s_{2j},s_{2j-1})\right|.
    \label{eq:4_22}
    \end{aligned}
    \end{equation}
    To estimate the integrand in \eqref{eq:4_22}, note that
    \begin{equation}
    \begin{aligned}
        & \left|\prod_{j=1}^{m}\alpha(s_{2j},s_{2j-1})-\prod_{j=1}^{m}\widehat{\alpha}_{r}(s_{2j},s_{2j-1})\right| \\ 
        \leq& \sum_{j=1}^{m} \prod_{k=j+1}^{m} |\widehat{\alpha}_r(s_{2k},s_{2k-1})| \cdot \left|\alpha(s_{2j},s_{2j-1})-\widehat{\alpha}_r(s_{2j},s_{2j-1})\right| \cdot \prod_{k=1}^{j-1} |\alpha(s_{2k},s_{2k-1})|.
    \label{eq:4_23}
    \end{aligned}
    \end{equation}
    Let $A\coloneq \int_0^T\int_0^T|\alpha(t,s)|\diff t\diff s$. Substituting \eqref{eq:4_23} into \eqref{eq:4_22} gives
    \begin{equation*}
    \begin{aligned}
        g(t,\alpha,\widehat{\alpha}_r) \leq g(T, \alpha, \widehat{\alpha}_r) 
        \leq \frac{1}{m!2^m}\left[\left(A+T\varepsilon_r\right)^m - A^m\right].
    \end{aligned}
    \end{equation*}
    Consequently, we have
    \begin{equation*}
    \begin{aligned}
        & \left\|\widetilde{\psi}_t^{(n,m)} - \widetilde{\psi}_t^{r(n,m)}\right\|_{\mathcal{H}} \\
        &\quad \leq \frac{\|L\|_{\mathcal{H}}^n}{n!}\left(\int_0^t|z_s|\diff s\right)^n \frac{\|L\|_{\mathcal{H}}^{2m}}{m!}\left[\left(\frac{A+T\varepsilon_r}{2}\right)^m - \left(\frac{A}{2}\right)^m\right] \|\psi_0\|_{\mathcal{H}}.
    \end{aligned}
    \end{equation*}
    Summing over $n$ and $m$, and invoking the absolute convergence of the series, we obtain
    \begin{equation*}
    \begin{aligned}
        \sup_{t\in[0,T]}\left\|\psi_t-\psi_t^{r}\right\|_{\mathcal{H}} 
        \leq& \sup_{t\in[0,T]}\sum_{n=0}^{\infty}\sum_{m=0}^{\infty}\left\|\widetilde{\psi}_t^{(n,m)} - \widetilde{\psi}_t^{r(n,m)}\right\|_{\mathcal{H}} \\
        \leq& e^{\|L\|_{\mathcal{H}}\int_{0}^T|z_s|\diff s}\left(e^{\frac{\|L\|_{\mathcal{H}}^2}{2}(A+T\varepsilon_r)} - e^{\frac{\|L\|_{\mathcal{H}}^2}{2}A}\right) \|\psi_0\|_{\mathcal{H}}.
    \end{aligned}
    \end{equation*}
    Finally, taking the expectation and applying the estimate from \cref{lemma:z_expect_estimate}, we establish the desired result \eqref{eq:converg_r}.
\end{proof}

\subsection{Convergence of the finite truncation}
\label{subsec:converg_trunc}
In this subsection, we establish convergence of the finitely truncated hierarchical system \eqref{eq:finite_hierarchy}. We equip $\mathbb{N}^r$ with the graded lexicographic order: for $\boldsymbol{j},\boldsymbol{k}\in\mathbb{N}^r$, we write $\boldsymbol{j}\prec\boldsymbol{k}$ if either $|\boldsymbol{j}|<|\boldsymbol{k}|$, or $|\boldsymbol{j}|=|\boldsymbol{k}|$ and $\boldsymbol{j}$ precedes $\boldsymbol{k}$ in the usual lexicographic order. Utilizing the operator $D_j$ defined in \eqref{eq:operator_Dj}, let $\Psi_t^r$ denote the infinite dimensional vector:
\begin{equation*}
    \Psi_t^r\coloneqq\left(\prod_{j=1}^r D_j^{k_j}\psi^r_t\right)_{\boldsymbol{k}\in\mathbb{N}^r},
\end{equation*}
where $\psi_t^r$ is the mild solution of \eqref{eq:low_rank_decomp_NMSSE}. To characterize the solution of the hierarchical system, we introduce the Banach space
\begin{equation*}
    \mathcal{H}_w \coloneqq \left\{\Psi=\left(\psi^{(\boldsymbol{k})}\right): \boldsymbol{k}\in\mathbb{N}^r,\ \ \psi^{(\boldsymbol{k})}\in\mathcal{H},\ \  \|\Psi\|_{w}<\infty\ \ \mathrm{a.s.}\right\},
\end{equation*}
with the norm $\|\cdot\|_w$ defined as
\begin{equation*}
    \left\|\Psi\right\|_w^2 \coloneqq \sum_{\boldsymbol{k}\in\mathbb{N}^r} \prod_{j=1}^{r}\frac{\lambda_{j}^{k_j}}{k_j!}\left\|\psi^{(\boldsymbol{k})}\right\|_\mathcal{H}^2,
\end{equation*}
where $\lambda_j$ correspond to the low-rank BCF approximation in \eqref{eq:low_rank_decomp_corr_func}. The following lemma shows that $\Psi_t^r\in\mathcal{H}_w$ for $t\in[0,T]$.
\begin{lemma}\label{lemma:estimate_D_j}
    The norm of each auxiliary component $\prod_{j=1}^rD_j^{k_j}\psi^r_t$ satisfies the following upper bound:
    \begin{equation}
        \left\|\prod_{j=1}^r D_j^{k_j}\psi^r_t\right\|_{\mathcal{H}} \leq \|L\|_{\mathcal{H}}^{|\boldsymbol{k}|}\prod_{j=1}^r\left(\int_0^t|V_j(s)|\diff s\right)^{k_j} \mathcal{M}_t^r \|\psi_0\|_{\mathcal{H}},
    \label{eq:estimate_D_j}
    \end{equation}
    where
    \begin{equation}
    \label{eq:M_t_r}
        \mathcal{M}_t^r \coloneq \exp\left\{\|L\|_{\mathcal{H}}\int_{0}^t|z_s|\diff s + \frac{\|L\|^2_{\mathcal{H}}}{2}\int_0^t\int_0^t|\widehat{\alpha}_r(u,v)|\diff u\diff v\right\}.
    \end{equation}
    Here, $\widehat{\alpha}_r$ is the rank-$r$ approximation of $\alpha$. It follows that $\Psi^r_t\in\mathcal{H}_w$, i.e., 
    \begin{equation*}
        \mathbb{P}\left(\left\|\Psi^r_t\right\|_{w}<\infty\right) = 1.
    \end{equation*}
\end{lemma}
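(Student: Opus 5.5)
We will work directly with the series representation of \cref{thm:analytical_solution}, applied to the low-rank equation \eqref{eq:low_rank_decomp_NMSSE}. Concretely, $\psi^r_t$ is given by \eqref{eq:infinite_series_solution} with $\alpha$ replaced by $\widehat{\alpha}_r$; the noise $z$ is still the original Gaussian process. In each term $\widetilde{\psi}_t^{r(n,m)}$, the only dependence on $z$ is through the polynomial factor $z_{\tau_n}\cdots z_{\tau_1}$. The kernel $\widehat{\alpha}_r$ and the operator string $\mathscr{T}\{\cdots\}\psi_0$ do not depend on $z$, and $S(t)$ commutes with $D_j$. The Gâteaux derivative $D_j$ therefore acts only on the monomial. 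By the product rule and $D_j z_\tau = V_j(\tau)$, applying $D_j$ replaces one factor $z_{\tau_i}$ by $V_j(\tau_i)$, summed over $i=1,\dots,n$.

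The first step is to iterate this. $\prod_j D_j^{k_j}\widetilde{\psi}^{r(n,m)}_t$ vanishes for $n<|\boldsymbol{k}|$. For $n\ge|\boldsymbol{k}|$, it is a sum over injective assignments of $|\boldsymbol{k}|$ distinct positions among $\tau_1,\dots,\tau_n$ to the labels, with $k_j$ positions carrying $V_j$; the remaining $n-|\boldsymbol{k}|$ positions keep $z$. To justify term-by-term differentiation, we note that the differentiated series is dominated by an absolutely convergent one (the bound below). Hence the series in $\varepsilon$ defining the Gâteaux derivative converges locally uniformly and may be differentiated termwise, just as in the absolute convergence argument of \cref{thm:analytical_solution}.

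The second step is the bound. We use $\|\widetilde{L}(\tau)\|_{\mathcal{H}}=\|L\|_{\mathcal{H}}$ and symmetrize the ordered $\tau$-integral to the cube $[0,t]^n$ with a factor $1/n!$, as in \eqref{eq:4_8}. The integral then factorizes. Each of the $n!/(n-|\boldsymbol{k}|)!$ placements (counted with the multinomial structure) gives
\[
\prod_j\Big(\int_0^t|V_j|\Big)^{k_j}\Big(\int_0^t|z_s|\diff s\Big)^{n-|\boldsymbol{k}|}.
\]
More precisely, the number of ways to choose ordered distinct positions for the $|\boldsymbol{k}|$ labelled derivatives is $n!/(n-|\boldsymbol{k}|)!$. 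Dividing by $n!$ leaves $1/(n-|\boldsymbol{k}|)!$. Repeated $D_j$ hitting distinct factors produces at most this many terms, and in fact the count must be done carefully, since $D_j^{k_j}$ of a monomial yields $k_j!\binom{\cdot}{\cdot}$-type multiplicities. The $m$-part is bounded exactly as in \eqref{eq:4_9} with $\widehat{\alpha}_r$. Summing over $n\ge|\boldsymbol{k}|$ and $m$ gives an exponential of $\|L\|\int_0^t|z_s|\diff s$, times the $\widehat{\alpha}_r$ exponential, times $\|L\|^{|\boldsymbol{k}|}\prod_j(\int_0^t|V_j|)^{k_j}$. This is \eqref{eq:estimate_D_j}.

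The main obstacle is this combinatorial count. If the multiplicity yields an extra factor $\prod_j k_j!$ rather than $1$, the bound would carry that factor. I would first try to show it cancels: applying $D_j^{k_j}$ to $z^{n}$-type symmetric monomials gives $n!/(n-|\boldsymbol{k}|)!$ terms, and the symmetrization's $1/n!$ absorbs them fully. Even with a stray $\prod k_j!$, the final claim survives. With $c_j=\int_0^t|V_j|\le\sqrt{T}$ (since $V_j$ is orthonormal), we have
\[
\|\Psi^r_t\|_w^2\le(\mathcal{M}^r_t\|\psi_0\|)^2\sum_{\boldsymbol{k}}\prod_j\frac{(\lambda_j\|L\|^2c_j^2)^{k_j}}{k_j!}=(\mathcal{M}^r_t\|\psi_0\|)^2\exp\Big\{\|L\|^2\sum_j\lambda_jc_j^2\Big\}<\infty
\]
almost surely, because $\mathcal{M}^r_t<\infty$ a.s. by continuity of $z$. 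This proves $\Psi^r_t\in\mathcal{H}_w$.
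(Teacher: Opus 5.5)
Your argument is correct and is essentially the paper's proof: the paper likewise expands $\psi_t^r$ in the series of \cref{thm:analytical_solution}, lets $D_j$ act only on the $z$-monomial, bounds each term as in \eqref{eq:absolute_converg_estimate}--\eqref{eq:4_10}, and then sums the weighted series. The differences are minor: the paper applies one $D_j$ at a time and iterates, rather than counting all $|\boldsymbol{k}|$ placements at once, and it uses $\lambda_j\bigl(\int_0^t|V_j(s)|\,\diff s\bigr)^2\le t\int_0^t\alpha(s,s)\,\diff s$ where you use $\int_0^t|V_j(s)|\,\diff s\le\sqrt{T}$.

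Your count $n!/(n-|\boldsymbol{k}|)!$ is exactly right, so no stray $\prod_j k_j!$ arises; the $k_j!$ produced by repeated $D_j$ is already contained in $k_j!\binom{n}{k_j}=n!/(n-k_j)!$. This matters because your fallback remark is false: with an extra $\prod_j k_j!$ in the bound, the $w$-weighted sum would become $\sum_{\boldsymbol{k}}\prod_j k_j!\,x_j^{k_j}$ with $x_j=\lambda_j\|L\|_{\mathcal{H}}^2c_j^2$, which diverges whenever some $x_j>0$.
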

\begin{proof}
    We consider the infinite series representation of $\psi_t^r$ in \cref{thm:analytical_solution}. We have an estimate of $D_j\widetilde{\psi}_t^{r(n,m)}$ following the procedure of \eqref{eq:absolute_converg_estimate}--\eqref{eq:4_10}:
    \begin{equation*}
    \begin{aligned}
        \left\|D_j\widetilde{\psi}_t^{r(n,m)}\right\|_{\mathcal{H}} \leq& \left(\|L\|_{\mathcal{H}}\int_0^t|V_j(s)|\diff s\right) \frac{1}{(n-1)!}\left(\|L\|_{\mathcal{H}}\int_0^t|z_s|\diff s\right)^{n-1} \\
        &\frac{1}{m!}\left(\frac{\|L\|_{\mathcal{H}}^2}{2}\int_0^t\int_0^t|\widehat{\alpha}_r(u,v)|\diff u\diff v\right)^m  \|\psi_0\|_{\mathcal{H}}.
    \end{aligned}
    \end{equation*}
    Summing them up yields
    \begin{equation*}
        \left\|D_j\psi_t^r\right\|_{\mathcal{H}} \leq \left(\|L\|_{\mathcal{H}}\int_0^t|V_j(s)|\diff s\right) \mathcal{M}_t^r \|\psi_0\|_{\mathcal{H}}.
    \end{equation*}
    By recursively applying the operator $D_j$ and iterating the aforementioned procedure, we obtain the bound \eqref{eq:estimate_D_j}. Substituting the estimate into the norm $\|\cdot\|_w$ implies
    \begin{equation}
        \|\Psi_t^r\|_w^2 \leq \sum_{\boldsymbol{k}\in\mathbb{N}^r}\prod_{j=1}^{r}\frac{1}{k_j!}\left(\|L\|_{\mathcal{H}}\int_0^t\sqrt{\lambda_j}|V_j(s)|\diff s\right)^{2k_j} \left(\mathcal{M}_t^r\right)^2\|\psi_0\|_{\mathcal{H}}^2.
    \label{eq:4_29}
    \end{equation}
    Furthermore, an application of the Cauchy-Schwarz inequality yields
    \begin{equation}
    \begin{aligned}
        \left(\int_0^t\sqrt{\lambda_j}|V_j(s)|\diff s\right)^2 
        \leq \int_0^t1^2\diff s\int_0^t\lambda_jV_j^*(s)V_j(s)\diff s 
        \leq t\int_0^t\alpha(s,s)\diff s.
    \label{eq:4_30}
    \end{aligned}
    \end{equation}
    Combining \eqref{eq:4_29} and \eqref{eq:4_30}, we arrive at
    \begin{equation*}
        \|\Psi_t^r\|_w^2 \leq \sum_{\boldsymbol{k}\in\mathbb{N}^r}\prod_{j=1}^{r}\frac{1}{k_j!}\left(t\|L\|_{\mathcal{H}}^2\int_0^t\alpha(s,s)\diff s\right)^{k_j} \left(\mathcal{M}_t^r\right)^2\|\psi_0\|_{\mathcal{H}}^2 < \infty, \quad \text{a.s.},
    \end{equation*}
    which completes the proof.
\end{proof}

For the following analysis, we now turn to the interaction picture of the truncated hierarchical system \eqref{eq:finite_hierarchy}, which reads
\begin{equation}
\label{eq:finite_hierarchy_interaction}
    \frac{\partial}{\partial t} \widetilde{\psi}_t^{(\boldsymbol{k})} = z_t\widetilde{L}(t)\widetilde{\psi}_t^{(\boldsymbol{k})} + \widetilde{L}(t)\sum_{\substack{j=1 \\ \boldsymbol{k}-\boldsymbol{e}_j\in\mathcal{K}^r_N}}^{r}k_jV_j(t)\widetilde{\psi}_t^{(\boldsymbol{k}-\boldsymbol{e}_j)} - \widetilde{L}^{\dagger}(t)\sum_{\substack{j=1\\ \boldsymbol{k}+\boldsymbol{e}_j\in\mathcal{K}^r_N}}^{r}\lambda_jV_j^*(t)\widetilde{\psi}_t^{(\boldsymbol{k}+\boldsymbol{e}_j)}.
\end{equation}
Let $\widetilde{\Psi}_t^{r,N}$ denote the strong solution to \eqref{eq:finite_hierarchy_interaction} subject to the initial conditions
\begin{equation*}
    \widetilde{\psi}_0^{(\boldsymbol{0})} = \widetilde{\psi}_0 = \psi_0, \quad \text{and}\quad \widetilde{\psi}_0^{(\boldsymbol{k})} = 0,\quad \forall \boldsymbol{k}\in\mathcal{K}^r_N, |\boldsymbol{k}|>0.
\end{equation*}
We embed $\widetilde{\Psi}_t^{r,N}$ into $\mathcal{H}_w$ by defining its components as $\widetilde{\psi}_t^{(\boldsymbol{k})}$ for $\boldsymbol{k} \in \mathcal{K}_N^r$ and setting $\widetilde{\psi}_t^{(\boldsymbol{k})} = 0$ for all $\boldsymbol{k} \notin \mathcal{K}_N^r$. Consequently, to establish the convergence of $\psi_t^{r,N}$ to $\psi_t^r$, it suffices to show that $\widetilde{\Psi}_t^{r,N}$ converges to $\widetilde{\Psi}_t^r$ in $\mathcal{H}_w$ as $N\to\infty$.

The interaction picture of the infinite hierarchical system can be reformulated as
\begin{equation*}
    \frac{\partial}{\partial t}\widetilde{\Psi}_t^{r} = \mathcal{A}(t)\widetilde{\Psi}_t^r,
\end{equation*}
where $\mathcal{A}(t)$ serves as the infinitesimal generator of the underlying propagator. We further define the projection operator $P_N \in \mathcal{L}(\mathcal{H}_w)$, which maps the infinite hierarchy onto its finite-dimensional subspace by retaining components indexed by $\boldsymbol{k} \in \mathcal{K}_N^r$ and mapping all higher-order components to zero; specifically, 
\begin{equation*}
    (P_N\Psi)_{\boldsymbol{k}} = \begin{cases}
        \psi^{(\boldsymbol{k})}, & \boldsymbol{k}\in\mathcal{K}^r_N, \\
        0, & \boldsymbol{k}\notin\mathcal{K}^r_N.
    \end{cases}
\end{equation*}
The finite system \eqref{eq:finite_hierarchy_interaction} is reformulated as
\begin{equation}
    \frac{\partial}{\partial t} \widetilde{\Psi}_t^{r,N} = \mathcal{A}_N(t)\widetilde{\Psi}_t^{r,N},
\label{eq:4_34}
\end{equation}
where $\mathcal{A}_N(t) \coloneqq P_N\mathcal{A}(t)P_N$. Consider the error function $E_N(t)\coloneqq \widetilde{\Psi}_t^{r,N}-P_N\widetilde{\Psi}_t^r$ subject to $E_N(0)=0$. By construction, $E_N(t)$ satisfies
\begin{equation*}
\begin{aligned}
    \frac{\partial}{\partial t} E_N(t) =& \frac{\partial}{\partial t}\widetilde{\Psi}_t^{r,N} - \frac{\partial}{\partial t}\left(P_N\widetilde{\Psi}_t^{r}\right) \\
    =& \mathcal{A}_N(t)\widetilde{\Psi}_t^{r,N} - P_N\mathcal{A}(t)\widetilde{\Psi}_t^r \\
    =& \mathcal{A}_N(t)\left(\widetilde{\Psi}_t^{r,N}-P_N\widetilde{\Psi}_t^r\right) + \left(\mathcal{A}_N(t)P_N - P_N\mathcal{A}(t)\right)\widetilde{\Psi}_t^r \\
    =& \mathcal{A}_N(t)E_N(t) + P_N\mathcal{A}(t)(P_N-I)\widetilde{\Psi}_t^r,
\end{aligned}
\end{equation*}
where $I$ denotes the identity operator in $\mathcal{L}(\mathcal{H}_w)$ and the last step follows from $P_N^2=P_N$. To derive a bound for the error $E_N(t)$, we first establish a stability estimate for \eqref{eq:4_34} using an energy-based approach.

\begin{lemma}\label{lemma:energy_estimate}
    The strong solution of \eqref{eq:4_34} has an estimate
    \begin{equation}
        \left\|\widetilde{\Psi}_t^{r,N}\right\|_w \leq \left\|\widetilde{\Psi}_0^{r,N}\right\|_w \exp\left\{\|L\|_{\mathcal{H}}\int_0^t|z_s|\diff s\right\}.
    \label{eq:4_36}
    \end{equation}
\end{lemma}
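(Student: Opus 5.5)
We plan a standard energy estimate in the weighted norm $\|\cdot\|_w$: differentiate $\|\widetilde{\Psi}_t^{r,N}\|_w^2$ in time, show that the two coupling terms (the lowering term with coefficient $k_jV_j$ and the raising term with coefficient $\lambda_jV_j^*$) cancel exactly in the real part of the weighted inner product, and then close with Grönwall's inequality.

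Write $w_{\boldsymbol{k}}=\prod_{j=1}^r\lambda_j^{k_j}/k_j!$, so that $\|\Psi\|_w^2=\sum_{\boldsymbol{k}}w_{\boldsymbol{k}}\|\psi^{(\boldsymbol{k})}\|_{\mathcal{H}}^2$; since only $\boldsymbol{k}\in\mathcal{K}^r_N$ contribute, this is a finite sum. We would first restrict to indices with $\lambda_j>0$, since if $\lambda_j=0$ the components with $k_j>0$ carry zero weight and also do not feed back into lower levels. Because \eqref{eq:finite_hierarchy_interaction} is a finite linear system with continuous, pathwise bounded coefficients, the strong solution is $C^1$ in $t$, and
\begin{equation*}
\frac{\diff}{\diff t}\|\widetilde{\Psi}_t^{r,N}\|_w^2 = 2\,\mathrm{Re}\sum_{\boldsymbol{k}\in\mathcal{K}^r_N} w_{\boldsymbol{k}}\Big\langle \widetilde{\psi}_t^{(\boldsymbol{k})}, \partial_t\widetilde{\psi}_t^{(\boldsymbol{k})}\Big\rangle_{\mathcal{H}}.
\end{equation*}
Substituting the right-hand side of \eqref{eq:finite_hierarchy_interaction} splits this into three sums. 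The diagonal term is bounded directly: $2\,\mathrm{Re}\,w_{\boldsymbol{k}}\langle\widetilde{\psi}^{(\boldsymbol{k})},z_t\widetilde{L}(t)\widetilde{\psi}^{(\boldsymbol{k})}\rangle\le 2|z_t|\|L\|_{\mathcal{H}}w_{\boldsymbol{k}}\|\widetilde{\psi}^{(\boldsymbol{k})}\|^2$, which gives $2|z_t|\|L\|_{\mathcal{H}}\|\widetilde{\Psi}_t^{r,N}\|_w^2$ in total.

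The key step, and the main potential obstacle, is the cancellation of the off-diagonal terms. The lowering term contributes
\begin{equation*}
\sum_{\boldsymbol{k}}\sum_j w_{\boldsymbol{k}}k_jV_j(t)\langle\widetilde{\psi}^{(\boldsymbol{k})},\widetilde{L}\widetilde{\psi}^{(\boldsymbol{k}-\boldsymbol{e}_j)}\rangle .
\end{equation*}
Reindex it by $\boldsymbol{m}=\boldsymbol{k}-\boldsymbol{e}_j$, so that both $\boldsymbol{m}$ and $\boldsymbol{m}+\boldsymbol{e}_j$ lie in $\mathcal{K}^r_N$, which matches exactly the index range of the raising term. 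The raising term contributes
\begin{equation*}
-\sum_{\boldsymbol{m}}\sum_j w_{\boldsymbol{m}}\lambda_jV_j^*(t)\langle\widetilde{\psi}^{(\boldsymbol{m})},\widetilde{L}^\dagger\widetilde{\psi}^{(\boldsymbol{m}+\boldsymbol{e}_j)}\rangle ,
\end{equation*}
and its complex conjugate is $-\,w_{\boldsymbol{m}}\lambda_jV_j(t)\langle\widetilde{\psi}^{(\boldsymbol{m}+\boldsymbol{e}_j)},\widetilde{L}\widetilde{\psi}^{(\boldsymbol{m})}\rangle$. The weights satisfy the identity $w_{\boldsymbol{m}+\boldsymbol{e}_j}(m_j+1)=w_{\boldsymbol{m}}\lambda_j$, which is precisely why the factorial weights were chosen. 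With it, the lowering term for the pair $(\boldsymbol{m}+\boldsymbol{e}_j,\boldsymbol{m})$ equals minus the conjugate of the raising term for the same pair. Since $\mathrm{Re}(a-\bar a)=0$, taking the real part makes the two sums cancel. The point to check carefully is that the truncation $\mathcal{K}^r_N$ removes the same pairs from both sums; this holds because the pair is kept exactly when $|\boldsymbol{m}|+1\le N$ in both cases.

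We therefore obtain $\frac{\diff}{\diff t}\|\widetilde{\Psi}_t^{r,N}\|_w^2\le 2\|L\|_{\mathcal{H}}|z_t|\|\widetilde{\Psi}_t^{r,N}\|_w^2$ pathwise. Grönwall's inequality, which applies since $z$ has continuous paths, gives $\|\widetilde{\Psi}_t^{r,N}\|_w^2\le\|\widetilde{\Psi}_0^{r,N}\|_w^2\exp\{2\|L\|_{\mathcal{H}}\int_0^t|z_s|\diff s\}$, and taking square roots yields \eqref{eq:4_36}.
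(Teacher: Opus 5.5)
Your proposal is correct and follows essentially the same route as the paper. Both arguments differentiate the weighted energy, reindex so that the adjoint identity and the weight relation $w_{\boldsymbol{m}+\boldsymbol{e}_j}(m_j+1)=w_{\boldsymbol{m}}\lambda_j$ make the lowering and raising terms cancel in the real part, bound the diagonal term by $|z_t|\|L\|_{\mathcal{H}}$, and close with Gr\"onwall. Your check that the truncation drops the same pairs from both sums, and your reduction to $\lambda_j>0$, just spell out the ``relabeling'' step the paper leaves implicit.
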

\begin{proof}
    Taking the time derivative of $\frac{1}{2}\|\widetilde{\Psi}_t^{r,N}\|_w^2$ and substituting \eqref{eq:finite_hierarchy_interaction} yields
    \begin{equation}
    \begin{aligned}
        \frac{1}{2}\frac{\partial}{\partial t}\left\|\widetilde{\Psi}_t^{r,N}\right\|_w^2 =\,& \mathrm{Re}\Bigg\{\sum_{\boldsymbol{k}\in\mathcal{K}_N^r}\prod_{j=1}^{r}\frac{\lambda_j^{k_j}}{k_j!}\left\langle\widetilde{\psi}_t^{(\boldsymbol{k})}, z_t\widetilde{L}(t)\widetilde{\psi}_t^{(\boldsymbol{k})}\right\rangle_{\mathcal{H}} \\
        &+ \sum_{\boldsymbol{k}\in\mathcal{K}_N^r}\sum_{\substack{\ell=1\\ \boldsymbol{k}-\boldsymbol{e}_{\ell}\in\mathcal{K}_N^r}}^{r}\prod_{j=1}^{r}\frac{\lambda_j^{k_j}}{k_j!}\left\langle\widetilde{\psi}_t^{(\boldsymbol{k})}, k_{\ell}V_{\ell}(t)\widetilde{L}(t)\widetilde{\psi}_t^{(\boldsymbol{k}-\boldsymbol{e}_{\ell})}\right\rangle_{\mathcal{H}} \\
        &- \sum_{\boldsymbol{k}\in\mathcal{K}_N^r}\sum_{\substack{\ell=1\\ \boldsymbol{k}+\boldsymbol{e}_{\ell}\in\mathcal{K}_N^r}}^{r}\prod_{j=1}^{r}\frac{\lambda_{j}^{k_j}}{k_j!}\left\langle\widetilde{\psi}_t^{(\boldsymbol{k})},\lambda_{\ell}V_{\ell}^*(t)\widetilde{L}^{\dagger}(t)\widetilde{\psi}_t^{(\boldsymbol{k}+\boldsymbol{e}_{\ell})}\right\rangle_{\mathcal{H}}\Bigg\}.
    \label{eq:energy_estimate}
    \end{aligned}
    \end{equation}
    Note that $\forall\phi,\psi\in\mathcal{H}$, the following identity holds:
    \begin{equation*}
        \left\langle\phi, V_{\ell}^*(t)\widetilde{L}^{\dagger}(t)\psi\right\rangle_{\mathcal{H}} = \left\langle V_{\ell}(t)\widetilde{L}(t)\phi, \psi \right\rangle_{\mathcal{H}} = \left\langle\psi,V_{\ell}(t)\widetilde{L}(t)\phi\right\rangle_{\mathcal{H}}^*.
    \end{equation*}
    By relabeling the indices, the last two terms on the right-hand side of \eqref{eq:energy_estimate} vanish. Consequently, we obtain the following inequality:
    \begin{equation*}
    \begin{aligned}
        \frac{1}{2}\frac{\partial}{\partial t}\left\|\widetilde{\Psi}_t^{r,N}\right\|_w^2 \leq |z_t| \|L\|_{\mathcal{H}}\left\|\widetilde{\Psi}_t^{r,N}\right\|_w^2.
    \end{aligned}
    \end{equation*}
    Finally, an application of Gronwall's inequality yields the desired estimate \eqref{eq:4_36}.
\end{proof}

With the stability estimate from \cref{lemma:energy_estimate}, we now proceed to establish the convergence by invoking Duhamel's principle.
\begin{theorem}\label{thm:truncation_convergence}
    Let $\psi_t^{r,N}$ and $\psi_t^{r}$ denote the $\boldsymbol{0}$-th solution of \eqref{eq:finite_hierarchy} and the mild solution of \eqref{eq:low_rank_decomp_NMSSE}, respectively. There exist constants $C, \beta > 0$ independent of $r$ and $N$ such that
    \begin{equation}
    \label{eq:truncation_converg_order}
        \mathbb{E}\left(\sup_{t\in[0,T]}\left\|\psi_t^{r} - \psi_t^{r,N}\right\|_{\mathcal{H}}\right) \leq C\sqrt{\frac{(\beta r)^N}{N!}}.
    \end{equation}
\end{theorem}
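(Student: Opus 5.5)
We argue with the error $E_N(t)=\widetilde{\Psi}_t^{r,N}-P_N\widetilde{\Psi}_t^r$ and the error equation derived just before \cref{lemma:energy_estimate}. The plan has three steps: a Duhamel bound for $E_N$, an estimate of the source term via \cref{lemma:estimate_D_j}, and finally an expectation bound.

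\emph{Step 1: Duhamel representation.} By Duhamel's principle,
\begin{equation*}
E_N(t)=\int_0^t \mathcal{U}_N(t,s)\,P_N\mathcal{A}(s)(P_N-I)\widetilde{\Psi}_s^r\,\diff s,
\end{equation*}
where $\mathcal{U}_N(t,s)$ is the propagator of \eqref{eq:4_34}. Applying the energy argument of \cref{lemma:energy_estimate} on $[s,t]$ gives
\begin{equation*}
\|\mathcal{U}_N(t,s)\|_w\le \exp\Big\{\|L\|_{\mathcal{H}}\int_s^t|z_u|\diff u\Big\}.
\end{equation*}
Hence $\|E_N(t)\|_w\le e^{\|L\|\int_0^T|z|}\int_0^T\|P_N\mathcal{A}(s)(P_N-I)\widetilde{\Psi}^r_s\|_w\diff s$. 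Since the weight of the index $\boldsymbol{0}$ in $\|\cdot\|_w$ equals $1$, we have $\|\widetilde{\psi}^{r,N}_t-\widetilde{\psi}^r_t\|_{\mathcal{H}}\le\|E_N(t)\|_w$. Because $S(t)$ is unitary, the same bound holds for $\|\psi^{r,N}_t-\psi^r_t\|_{\mathcal{H}}$, uniformly in $t$.

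\emph{Step 2: the source term.} The operator $P_N\mathcal{A}(s)(P_N-I)$ only couples level $N$ to level $N+1$ through the term $-\widetilde{L}^\dagger\lambda_jV_j^*\widetilde{\psi}^{(\boldsymbol{k}+\boldsymbol{e}_j)}$ with $|\boldsymbol{k}|=N$. Its weighted norm squared is therefore bounded by
\begin{equation*}
\|L\|^2\sum_{|\boldsymbol{k}|=N}\prod_j\frac{\lambda_j^{k_j}}{k_j!}\Big(\sum_j\lambda_j|V_j(s)|\,\|\widetilde{\psi}^{(\boldsymbol{k}+\boldsymbol{e}_j)}_s\|\Big)^2.
\end{equation*}
Insert \cref{lemma:estimate_D_j} and bound $\lambda_j(\int_0^T|V_j|)^2\le T\int_0^T\alpha(u,u)\diff u=:\gamma$ using \eqref{eq:4_30}, so that each component at level $N+1$ carries a factor $\gamma^{(N+1)/2}$ relative to the weights. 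Apply Cauchy--Schwarz in $j$, use $\sum_j\lambda_j|V_j(s)|^2\le\alpha(s,s)$ together with $(k_j+1)$-type combinatorial factors, and use the count $\#\{|\boldsymbol{k}|=N\}\le$ a multinomial sum. The multinomial identity $\sum_{|\boldsymbol{k}|=N}\prod_j 1/k_j!=r^N/N!$ then yields
\begin{equation*}
\|P_N\mathcal{A}(s)(P_N-I)\widetilde{\Psi}^r_s\|_w^2\le C\,\frac{(\beta r)^N}{N!}\,(\mathcal{M}_T^r)^2\|\psi_0\|^2,
\end{equation*}
with $\beta$ a multiple of $\|L\|^2\gamma$ and possibly enlarged by a constant to absorb factors of $N+1$.

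\emph{Step 3: expectation.} We obtain
\begin{equation*}
\sup_t\|\psi^r_t-\psi^{r,N}_t\|\le C\sqrt{(\beta r)^N/N!}\;e^{2\|L\|\int_0^T|z_s|\diff s}\,e^{\|L\|^2 A_r/2}\|\psi_0\|.
\end{equation*}
Here $A_r=\iint|\widehat{\alpha}_r|\le A+T\varepsilon_r$ is bounded uniformly in $r$, since $\varepsilon_r\le\sum\lambda_j<\infty$ by \cref{lemma:Mercer}. Taking expectations, \cref{lemma:z_expect_estimate} with $z$ rescaled by $2\|L\|$ controls the exponential moment, treating $\psi_0$ as bounded or independent. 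The supremum is already handled pathwise.

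\emph{Main obstacle.} The delicate step is Step 2, namely the combinatorics at level $N+1$. The factor $k_j+1$ arising from $1/(k_j+1)!$ versus $1/k_j!$, combined with $\lambda_j$ appearing both in the weight and in the coupling, must be shown to produce only $(\beta r)^N/N!$ with $\beta$ independent of $r$. I would handle it by bounding $\lambda_j\le\lambda_1\le\sup\alpha\cdot T$ and $(N+1)\le 2^N$, absorbing both into $\beta$.
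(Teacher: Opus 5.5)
Your skeleton matches the paper's proof. It uses Duhamel plus the energy estimate of \cref{lemma:energy_estimate}, then reduces to the boundary source $P_N\mathcal{A}(s)(P_N-I)\widetilde{\Psi}_s^r$ supported on $|\boldsymbol{k}|=N$. From there it applies Cauchy--Schwarz in $j$ with $\sum_j\lambda_j|V_j(s)|^2\le\alpha(s,s)$, \cref{lemma:estimate_D_j}, the multinomial identity, and \cref{lemma:z_expect_estimate} for $e^{2\|L\|_{\mathcal{H}}\int_0^T|z_s|\diff s}$. Your bound $\iint|\widehat{\alpha}_r|\le A+T\varepsilon_r$ in place of \eqref{eq:4_55} is also fine. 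The gap is in Step 2, which is exactly where the $r$-dependence of the rate is decided, and the route you sketch does not give it. Suppose each of the $N+1$ factors of a level-$(N+1)$ component is bounded by the per-index estimate $\lambda_\ell(\int_0^T|V_\ell|)^2\le\gamma$. With $w_{\boldsymbol{k}}=\prod_\ell\lambda_\ell^{k_\ell}/k_\ell!$ and $w_{\boldsymbol{k}}\lambda_j=(k_j+1)w_{\boldsymbol{k}+\boldsymbol{e}_j}$, you then get
\begin{equation*}
\sum_{|\boldsymbol{k}|=N}w_{\boldsymbol{k}}\sum_{j=1}^r\lambda_j\|\widetilde{\psi}_s^{(\boldsymbol{k}+\boldsymbol{e}_j)}\|_{\mathcal{H}}^2=(N+1)\sum_{|\boldsymbol{m}|=N+1}w_{\boldsymbol{m}}\|\widetilde{\psi}_s^{(\boldsymbol{m})}\|_{\mathcal{H}}^2\le \|L\|_{\mathcal{H}}^2\gamma\, r\,\frac{(\|L\|_{\mathcal{H}}^2\gamma r)^{N}}{N!}\,(\mathcal{M}_T^r)^2\|\psi_0\|_{\mathcal{H}}^2 .
\end{equation*}
The factor $N+1$ cancels exactly against $(N+1)!$, so no $2^N$ absorption is needed. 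What survives is an extra factor $r$, i.e.\ a $\sqrt{r}$ in the final estimate. This cannot be absorbed into $\beta$, since already for $N=1$ it would require $r$ to stay bounded. Your proposed remedy $\lambda_j\le\lambda_1$ does not address this. If used on the coupling coefficient it would even break $\sum_j\lambda_j|V_j(s)|^2\le\alpha(s,s)$, because $\sum_{j\le r}|V_j(s)|^2$ is not bounded uniformly in $r$ (its integral over $[0,T]$ equals $r$).

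The missing point is that the extra index $j$ must be controlled by the summed estimate \eqref{eq:4_46}, $\sum_j\lambda_j(\int_0^T|V_j|)^2\le\gamma$, not by $r$ copies of the per-index bound. The paper does this by keeping the level-$N$ weight $w_{\boldsymbol{k}}$ of the boundary index. \Cref{lemma:estimate_D_j} gives $\lambda_j\|\widetilde{\psi}_s^{(\boldsymbol{k}+\boldsymbol{e}_j)}\|_{\mathcal{H}}^2\le C\prod_\ell(\int_0^T|V_\ell|)^{2k_\ell}\,\lambda_j(\int_0^T|V_j|)^2$, and the $j$-sum is bounded by $\gamma$ via \eqref{eq:4_46}. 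Only the $\boldsymbol{k}$-part is then bounded with \eqref{eq:4_30} and the multinomial identity. This gives $\gamma(\gamma r)^N/N!$ up to powers of $\|L\|_{\mathcal{H}}$, with no $(k_j+1)$ factors at all. Alternatively, you can keep your level-$(N+1)$ bookkeeping and apply the multinomial theorem to the actual numbers $b_\ell=\|L\|_{\mathcal{H}}^2\lambda_\ell(\int_0^T|V_\ell|)^2$, whose sum is at most $\|L\|_{\mathcal{H}}^2\gamma$. This yields $(\|L\|_{\mathcal{H}}^2\gamma)^{N+1}/N!$, which is even free of $r$ and implies \eqref{eq:truncation_converg_order}.
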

\begin{proof}
    Let $\boldsymbol{v}^{s}$ denote the solution of the homogeneous equation $\frac{\partial}{\partial t}\boldsymbol{v}^s = \mathcal{A}_N(t)\boldsymbol{v}^s$ with the initial condition $\boldsymbol{v}^s(s) = P_N\mathcal{A}(s)(P_N-I)\widetilde{\Psi}_s^r$. By Duhamel's principle, we have $E_N(t) = \int_0^t \boldsymbol{v}^s(t)\diff s$. By \cref{lemma:energy_estimate}, we have
    \begin{equation*}
    \begin{aligned}
        \left\|\boldsymbol{v}^s(t)\right\|_w \leq& \exp\left\{\|L\|_{\mathcal{H}}\int_s^t|z_u|\diff u\right\} \left\|P_N\mathcal{A}(s)(P_N-I)\widetilde{\Psi}_s^r\right\|_w.
    \end{aligned}
    \end{equation*}
    Consequently,
    \begin{equation}
    \begin{aligned}
        \left\|E_N(t)\right\|_w \leq& \exp\left\{\|L\|_{\mathcal{H}}\int_0^t|z_s|\diff s\right\} \int_0^t \left\|P_N\mathcal{A}(s)(P_N-I)\widetilde{\Psi}_s^r\right\|_w \diff s.
    \label{eq:4_42}
    \end{aligned}
    \end{equation}
    It remains to bound $\left\|P_N\mathcal{A}(s)(P_N-I)\widetilde{\Psi}_s^r\right\|_w$. We observe that $P_N \mathcal{A}(s)(P_N - I) \widetilde{\Psi}_s^r$ vanishes except for those components associated with multi-indices $\boldsymbol{k}$ on the truncation boundary, satisfying $|\boldsymbol{k}| = N$. For any such multi-index, the corresponding $\boldsymbol{k}$-th component is given by
    \begin{equation*}
        \sum_{j=1}^r \lambda_jV_j^*(s)\widetilde{L}^{\dagger}(s)\widetilde{\psi}_s^{(\boldsymbol{k}+\boldsymbol{e}_j)} \coloneq \sum_{j=1}^r \lambda_jV_j^*(s)\widetilde{L}^{\dagger}(s) D_j\prod_{m=1}^r D_m^{k_m} \widetilde{\psi}_s^r.
    \end{equation*}
    By definition,
    \begin{equation}
        \int_0^t \left\|P_N\mathcal{A}(s)(P_N-I)\widetilde{\Psi}_s^r\right\|_w \diff s = \int_0^t\sqrt{\sum_{|\boldsymbol{k}|=N}\prod_{j=1}^r\frac{\lambda_j^{k_j}}{k_j!}\left\|\sum_{j=1}^r\lambda_jV_j^*(s)\widetilde{L}^{\dagger}(s)\widetilde{\psi}_s^{(\boldsymbol{k}+\boldsymbol{e}_j)}\right\|_{\mathcal{H}}^2}\diff s.
    \label{eq:4_44}
    \end{equation}
    Applying the Cauchy-Schwarz inequality and \cref{lemma:estimate_D_j}, we obtain:
    \begin{equation}
    \begin{aligned}
        \left\|\sum_{j=1}^r\lambda_jV_j^*(s)\widetilde{\psi}_s^{(\boldsymbol{k}+\boldsymbol{e}_j)}\right\|_{\mathcal{H}}^2
        \leq& \Bigg(\sum_{j=1}^r \lambda_jV_j^*(s)V_j(s)\Bigg)\cdot 
        \Bigg(\|L\|_{\mathcal{H}}^{|\boldsymbol{k}|+1}\mathcal{M}_t^r\|\psi_0\|_{\mathcal{H}} \\
        & \prod_{\ell=1}^r\left(\int_0^t|V_{\ell}(u)|\diff u\right)^{k_{\ell}}\sum_{j=1}^r\int_0^t\sqrt{\lambda_j}|V_j(u)|\diff u \Bigg)^2,
    \label{eq:4_45}
    \end{aligned}
    \end{equation}
    where $\mathcal{M}_t^r$ is defined in \eqref{eq:M_t_r}. Here, we have utilized the fact that $s\leq t$ and substituted $t$ into the final term on the right-hand side. Applying Cauchy-Schwarz again, together with $\sum_{j=1}^r\lambda_jV_j^*(s)V_j(s) \leq \alpha(s,s)$, we obtain
    \begin{equation}
        \sum_{j=1}^r \left(\int_0^t \sqrt{\lambda_j}|V_j(u)|\diff u\right)^2 \leq t\int_0^t\alpha(s,s)\diff s.
    \label{eq:4_46}
    \end{equation}
    Combining \eqref{eq:4_30}, \eqref{eq:4_42}, \eqref{eq:4_44}, \eqref{eq:4_45}, and \eqref{eq:4_46} yields
    \begin{equation}
    \begin{aligned}
    \label{eq:4_54}
        \sup_{t\in[0,T]}\left\|\psi_t^{r} - \psi_t^{r,N}\right\|_{\mathcal{H}} =& \sup_{t\in[0,T]}\left\|\widetilde{\psi}_t^{r} - \widetilde{\psi}_t^{r,N}\right\|_{\mathcal{H}} 
        \leq \sup_{t\in[0,T]}\left\|E_N(t)\right\|_w \\
        \leq& \exp\left\{\|L\|_{\mathcal{H}}\int_0^T|z_s|\diff s\right\} \|L\|_{\mathcal{H}}^2\mathcal{M}_T^r\|\psi_0\|_{\mathcal{H}}\int_0^T\sqrt{\alpha(s,s)}\diff s \\
        &\ T\int_0^T\alpha(s,s)\diff s \sqrt{\sum_{|\boldsymbol{k}|=N}\prod_{j=1}^r \frac{1}{k_j!}\left(T\|L\|_{\mathcal{H}}^2\int_0^T\alpha(s,s)\diff s\right)^{k_j}}.
    \end{aligned}
    \end{equation}
    By Cauchy-Schwarz inequality,
    \begin{equation}
    \begin{aligned}
    \label{eq:4_55}
        \int_0^T\int_0^T\left|\widehat{\alpha}_r(t,s)\right|\diff t\diff s \leq& \,T\sqrt{\int_0^T\int_0^T\left|\widehat{\alpha}_r(t,s)\right|^2\diff t\diff s} \\
        \leq& \,T\sqrt{\int_0^T\int_0^T\left|\alpha(t,s)\right|^2\diff t\diff s}.
    \end{aligned}
    \end{equation}
    Note that for any $\beta > 0$, it holds that
    \begin{equation*}
        \sum_{|\boldsymbol{k}|=N}\prod_{j=1}^r\frac{\beta^{k_j}}{k_j!} = \frac{(\beta r)^N}{N!}.
    \end{equation*}
    Substituting \eqref{eq:4_55} into $\mathcal{M}_T^r$ in \eqref{eq:4_54}, taking expectation on both sides of \eqref{eq:4_54}, and applying \cref{lemma:z_expect_estimate}, we complete the proof.
\end{proof}


\subsection{Proof of Theorem \ref{thm:main_convergence}}
The proof of \cref{thm:main_convergence} follows directly by synthesizing the results of \cref{thm:low_rank_convergence} and \cref{thm:truncation_convergence}.

\section{Implementation}
\label{sec:numerical_scheme}
In this section, we describe the numerical generation of the colored noise paths and the low-rank approximation of the BCF. Furthermore, we present the numerical schemes employed to solve the finitely truncated hierarchical system \eqref{eq:finite_hierarchy}.

\subsection{Generation of the noise path and eigenfunctions}
The colored noise path $z_t$ is generated by Cholesky factorization. We discretize $[0,T]$ using $t_i=i\Delta t$, $\Delta t=T/M$, and set $\boldsymbol{z}=(z_{t_0},\ldots,z_{t_M})^{\top}$. Its covariance matrix is $S_{ij}=\alpha^*(t_i,t_j)$. Given a factorization $S=LL^{\dagger}$, one sample path is obtained from
\[
    \boldsymbol{z}=L\boldsymbol{\xi}, \qquad \boldsymbol{\xi}\sim\mathcal{CN}(0,I_{M+1}).
\]

The eigenpairs of $\alpha$ are computed from the grid matrix $A_{ij}=\alpha(t_i,t_j)$. If $A=U\Sigma V^{\dagger}$ is its singular value decomposition, then the discrete eigenvalues are approximated by the diagonal entries of $\Delta t\,\Sigma$, while the sampled eigenfunctions are given by the columns of $V/\sqrt{\Delta t}$.

\subsection{Numerical scheme for the finite hierarchical system}
We employ a Strang splitting scheme to solve the finite hierarchical system \eqref{eq:finite_hierarchy}. This approach is motivated by the non-commutativity between the Hamiltonian $H$ (representing the conservative dynamics) and the operator $L$ (associated with the dissipative terms). Let $\Delta t > 0$ denote the time step, and define the discrete time points $t_n = n\Delta t$ for $n=0, 1, \dots$. The Strang splitting update for the state $\Psi^{r,N}_t$ over one time step $[t_n, t_{n+1}]$ is formulated as
\begin{equation}
\label{eq:strang_splitting}
\begin{array}{lll}
    \displaystyle \frac{\partial}{\partial t}u_t^{(\boldsymbol{k})} = -iHu_t^{(\boldsymbol{k})}, &\quad u_{t_n}^{(\boldsymbol{k})} = \psi_{t_n}^{(\boldsymbol{k})}, &\quad t\in\left[t_n,t_{n+1/2}\right], \\[2ex]
    \displaystyle \frac{\partial}{\partial t}v_t^{(\boldsymbol{k})} = f^{(\boldsymbol{k})}(t,\boldsymbol{v}_t), &\quad v_{t_{n}}^{(\boldsymbol{k})}=u_{t_{n+1/2}}^{(\boldsymbol{k})}, &\quad t\in\left[t_n, t_{n+1}\right], \\[2ex]
    \displaystyle \frac{\partial}{\partial t}\phi_t^{(\boldsymbol{k})} = -iH\phi_t^{(\boldsymbol{k})}, &\quad \phi_{t_{n+1/2}}^{(\boldsymbol{k})} = v_{t_{n+1}}^{(\boldsymbol{k})}, &\quad t\in[t_{n+1/2}, t_{n+1}],
\end{array}
\end{equation}
for $\boldsymbol{k}\in\mathcal{K}^r_N$, where
\begin{equation*}
    f^{(\boldsymbol{k})}(t,\boldsymbol{v}_t) = z_tLv_t^{(\boldsymbol{k})} + L\sum_{\substack{j=1 \\ \boldsymbol{k}-\boldsymbol{e}_j\in\mathcal{K}^r_N}}^{r}k_jV_j(t)v_t^{(\boldsymbol{k}-\boldsymbol{e}_j)} - L^{\dagger}\sum_{\substack{j=1 \\ \boldsymbol{k}+\boldsymbol{e}_j\in\mathcal{K}^r_N}}^{r}\lambda_jV_j^*(t)v_t^{(\boldsymbol{k}+\boldsymbol{e}_j)}.
\end{equation*}
The numerical solution at the $(n+1)$-th time step is obtained as $\psi_{t_{n+1}}^{(\boldsymbol{k})} = \phi_{t_{n+1}}^{(\boldsymbol{k})}$. Regarding the splitting steps defined in \eqref{eq:strang_splitting}, the first and third subproblems correspond to the unitary evolution under the system Hamiltonian, which is computed via the propagator $S(\Delta t/2)$. The second subproblem, which involves the dissipative and stochastic terms, is integrated using a second-order Runge-Kutta method. In our implementation, we specifically employ Heun's method, which is formulated as follows:
\begin{equation*}
\begin{aligned}
    w_1^{(\boldsymbol{k})} =& f^{(\boldsymbol{k})}(t_n,\boldsymbol{v}_{t_n}), \\
    w_2^{(\boldsymbol{k})} =& f^{(\boldsymbol{k})}\left(t_{n+1},\boldsymbol{v}_{t_n} + \Delta t \boldsymbol{w}_1\right), \\
    v_{t_{n+1}}^{(\boldsymbol{k})} =& v_{t_n}^{(\boldsymbol{k})} + \frac{\Delta t}{2}\left(w_1^{(\boldsymbol{k})} + w_2^{(\boldsymbol{k})}\right).
\end{aligned}
\end{equation*}
A key computational advantage of this scheme in the present context is that it circumvents the requirement for evaluating the stochastic path $z_t$ and the eigenfunctions $V_j(t)$ at intermediate time steps.

\section{Numerical experiments}
\label{sec:numerical_experiments}

In this section, we present several numerical experiments to validate the proposed method for the NMSSE. We consider the two-level spin-boson model coupled to a bosonic bath, which is a benchmark model commonly used in the study of open quantum systems. Consequently, the system Hilbert space is identified as $\mathcal{H}=\mathbb{C}^2$. For any vectors $\phi, \psi \in \mathbb{C}^2$, the inner product and its associated norm are now defined as
\begin{equation*}
    \langle \phi, \psi \rangle = \phi^{\dagger}\psi, \quad\mathrm{and}\quad \|\psi\| = \sqrt{\langle \psi, \psi \rangle}.
\end{equation*}
The expectation value of an observable $O\in\mathcal{L}(\mathbb{C}^2)$ at time $t$ is given by
\begin{equation*}
    \langle O \rangle_t \coloneq \frac{\mathbb{E}\left(\left\langle\psi_t,O\psi_t\right\rangle\right)}{\mathbb{E}\left(\left\langle\psi_t,\psi_t\right\rangle\right)},
\end{equation*}
where $\psi_t$ is the solution to the NMSSE. To evaluate the accuracy of the proposed method, we benchmark our results against a reference solution obtained via the Time-Evolving Matrix Product Operators (TEMPO) method \cite{strathearn2018EfficientNonMarkovianQuantum}, which serves as a high-precision numerical standard for non-Markovian dynamics.

\begin{remark}
    The decay of the eigenvalues in the low-rank approximation is governed by the regularity of the BCF: if $\alpha\in \mathcal{C}^p$, then $\lambda_n=o(n^{-(p+1)})$ as $n\to\infty$ \cite{reade1984EigenvaluesPositiveDefinite}. Hence smoother BCFs require smaller numerical ranks. For a spectral density $J$, the BCF is generally given by
    \begin{equation*}
        \alpha(t,s) = \int_0^{\infty} J(\omega) \left[ \coth\left(\frac{\beta\omega}{2}\right) \cos(\omega(t-s)) - i \sin(\omega(t-s)) \right] \mathrm{d}\omega,
    \end{equation*}
    where $\beta$ is the inverse temperature and
    \begin{equation*}
        J(\omega) = \eta \omega^s f(\omega/\omega_c).
    \end{equation*}
    Here $\eta>0$, $s>0$, $\omega_c$ is the cutoff frequency, and $f$ specifies the high-frequency cutoff, e.g., $f(x)=e^{-x}$ or $f(x)=(x^2+1)^{-1}$. For the exponential cutoff with $s\geq 1$, $\alpha$ is smooth and the eigenvalues decay rapidly, so a small rank is sufficient. For less regular choices such as the Drude--Lorentz cutoff, one may instead use a Matsubara sum-of-exponentials representation \cite{tanimura2020NumericallyExactApproach}; in this setting our relaxed formulation recovers HOPS, as discussed in Section \ref{subsec:experiment_2}.
\end{remark}

\subsection{Exponential cutoff BCF with Ohmic spectral density}
We first evaluate the performance of our method using a spin-boson model with the following configuration:
\begin{itemize}
    \item The system Hamiltonian is given by $H=\varepsilon\sigma_z + \sigma_x$, where $\sigma_z$ and $\sigma_x$ are the Pauli matrices.
    \item The Lindblad operator is given by $L=\sigma_z$.
    \item The bath correlation function is given by \cite{cai2020InchwormMonteCarlo}
    \begin{equation}
        \alpha(t,s) = \sum_{j=1}^J\frac{c_{j}^2}{2\omega_{j}}\left[\coth\left(\frac{\beta\omega_j}{2}\right)\cos\left(\omega_j(t-s)\right) - i\sin\left(\omega_j(t-s)\right)\right],
    \label{eq:BCF_Ohmic}
    \end{equation}
    where 
    \begin{equation*}
    \begin{aligned}
        \omega_j &= -\omega_c\log\left(1-\frac{j}{J}\left[1-\exp(-\omega_{\mathrm{max}}/\omega_c)\right]\right), \\
        c_j &= \omega_j\sqrt{\frac{\xi\omega_c}{J}\left[1-\exp(-\omega_{\mathrm{max}}/\omega_c)\right]}.
    \end{aligned}
    \end{equation*}
\end{itemize}
The physical parameters are set as $\beta=5$, $\omega_c=2.5$, and $\omega_{\mathrm{max}}=4\omega_c$, with the number of bath modes $J=200$. We conduct experiments by varying the energy bias $\varepsilon$ and the coupling strength $\xi$. The system is initialized in the state $\psi_0 = (1,0)^{\top}$. Numerical simulations are performed over the time interval $[0,5]$ with a uniform time step $\Delta t=0.05$.

\begin{remark}
    This BCF \eqref{eq:BCF_Ohmic} can be written as a sum of exponentials:
    \begin{equation*}
         \alpha(t,s) = \sum_{j=1}^J \frac{c_j^2}{4\omega_j}\left(g_je^{i\omega_j(t-s)}+h_je^{-i\omega_j(t-s)}\right),
    \end{equation*}
    where $g_j=\coth\left(\beta\omega_j/2\right)-1$ and $h_j=\coth\left(\beta\omega_j/2\right)+1$. However, it is formidable to solve this problem with HOPS, since there are 400 exponentials in the BCF, which leads to a multi-index of length 400 to represent the auxiliary wave functions. The hierarchical system is thus too large to be solved in practice. In contrast, the proposed method can solve this problem efficiently by using a low-rank approximation of the BCF. \cref{fig:BCF_approx} shows that the BCF \eqref{eq:BCF_Ohmic} can be well approximated by a low-rank approximation with rank $r=10$.
\end{remark}

\begin{figure}[htbp]
    \centering
    \includegraphics[width=0.5\textwidth]{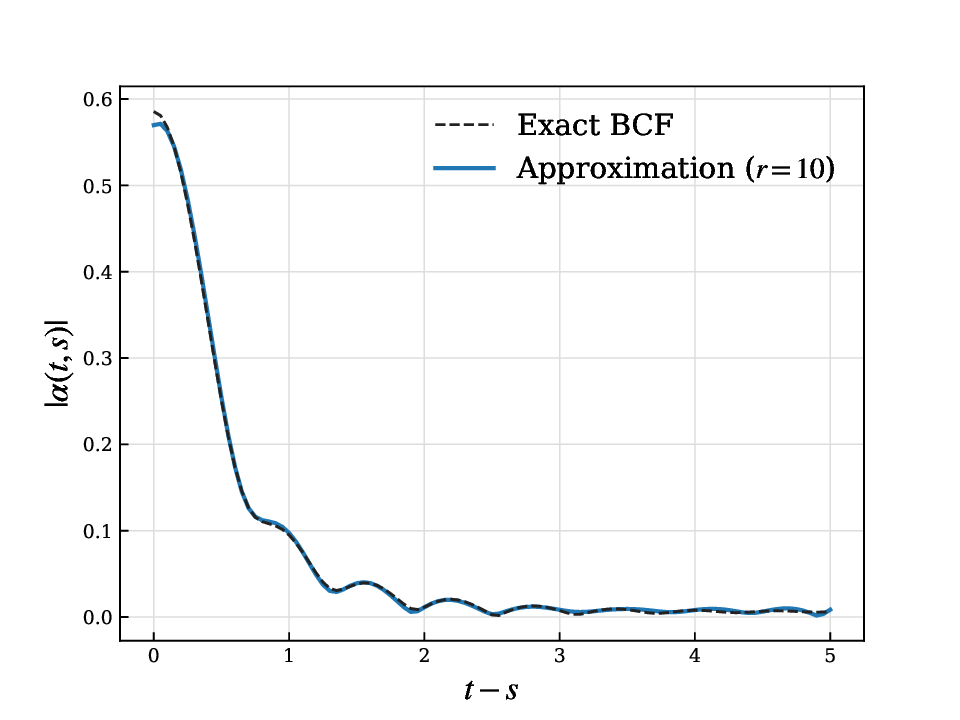}
    \caption{Amplitude of the BCF \eqref{eq:BCF_Ohmic} and its low-rank approximation with rank $r=10$.}
    \label{fig:BCF_approx}
\end{figure}

\Cref{fig:Ohmic_trajs_xi_multi} illustrates the evolution of $\langle\sigma_z\rangle_t$ for various values of $\varepsilon=0,1,2$, $\xi=0.2,0.4$, rank $r=10$, and truncation order $N=8$. Under this configuration, the total number of hierarchical equations is given by
\begin{equation*}
    \sum_{n=0}^N\binom{n+r-1}{r-1} = \binom{N+r}{r} = \binom{18}{10} = 43758.
\end{equation*}
This cardinality is significantly lower than that required by the standard HOPS method, which entails $\binom{408}{400}\approx 10^{15}$ equations. \Cref{fig:Ohmic_trajs_xi_multi} also compares the results obtained using different sample sizes for $z$. These numerical trajectories demonstrate that the proposed method converges to the reference solution as the sample size increases.

\begin{figure}[htbp]
    \centering
    \includegraphics[width=\textwidth]{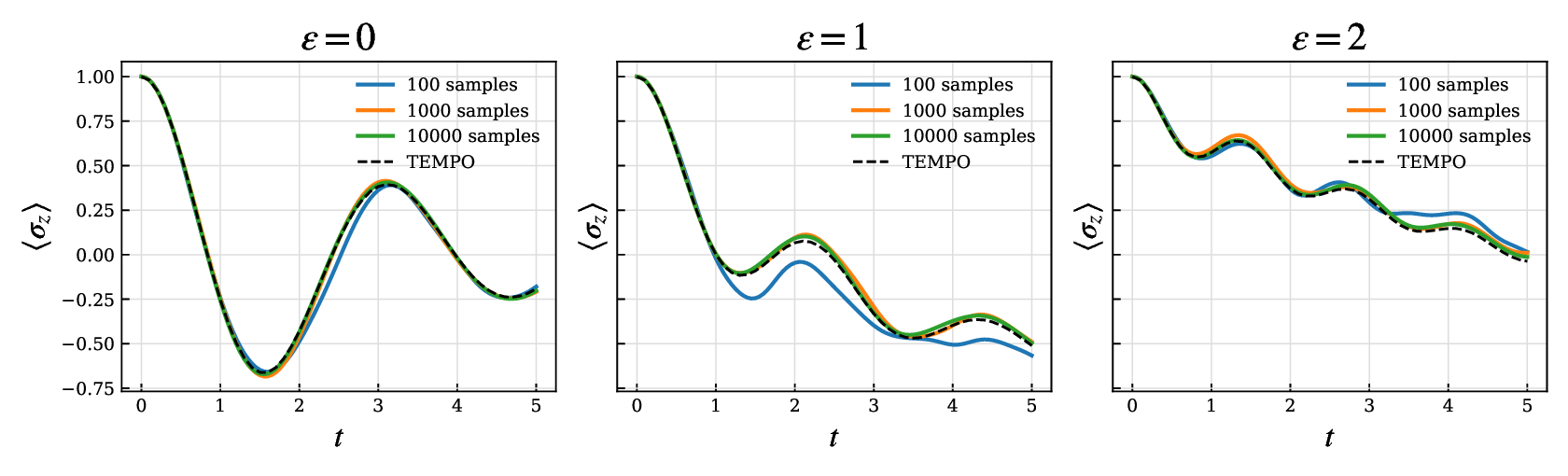}\\
    \includegraphics[width=\textwidth]{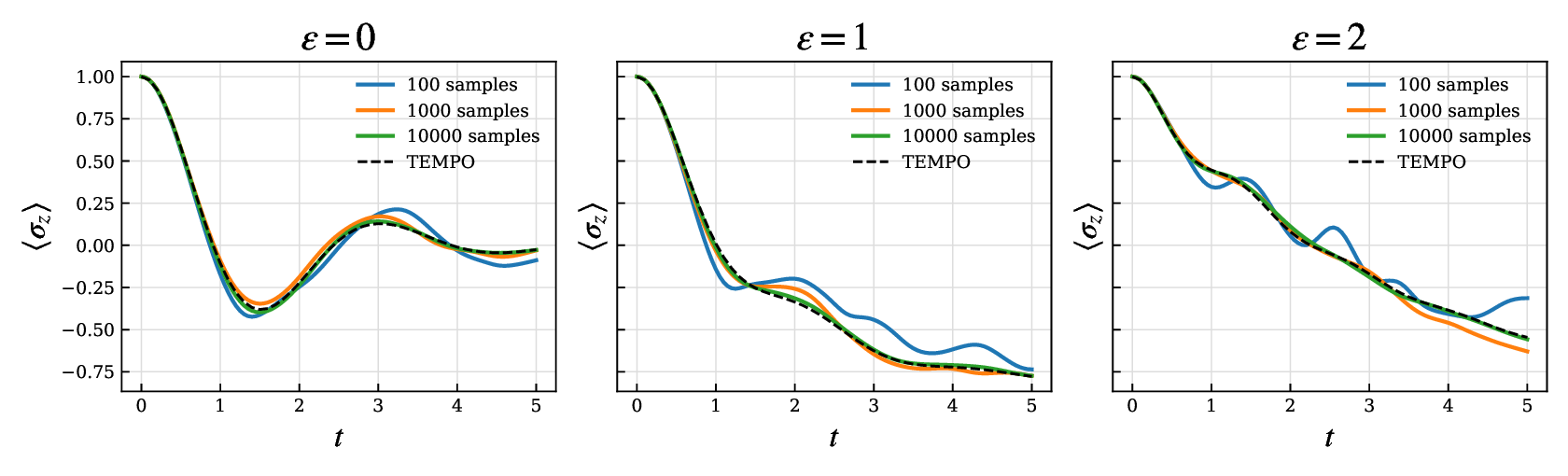}
    \caption{Evolution of $\langle\sigma_z\rangle_t$ for different parameter settings (top to bottom: $\xi=0.2,0.4$).}
    \label{fig:Ohmic_trajs_xi_multi}
\end{figure}




Since the size of the truncated hierarchy grows rapidly with the rank $r$, we restrict the convergence-order test to the truncation order $N$. For each fixed rank $r=2,4,8$, we vary $N$ and take the solution with $N=16$ as the reference solution, denoted by $\psi_t^{r,\mathrm{ref}}$. The error is measured by
\[
    e_N\coloneq \mathbb{E}\left(\sup_{t\in[0,T]}\left\|\psi_t^{r,\mathrm{ref}}-\psi_t^{r,N}\right\|\right).
\]
According to \eqref{eq:truncation_converg_order}, for fixed $r$ this error is expected to satisfy
\[
    e_N=\mathcal{O}\left(\sqrt{\frac{(\beta r)^N}{N!}}\right).
\]
To test this prediction, \Cref{fig:convergence_order} reports two normalized quantities. The first is $\log(e_N\sqrt{N!})$: after removing the factorial factor, the remaining dependence on $N$ should be at most exponential, so the curves in \Cref{fig:convergence_order_log} are expected to be approximately affine. The second diagnostic is the rescaled consecutive error ratio. From the same estimate, it should satisfy
\[
    \frac{e_{N+1}}{e_N}\sqrt{N+1}\approx \sqrt{\beta r},
\]
up to constants independent of $N$. Thus the bounded and stabilizing profiles in \Cref{fig:convergence_order_ratio} give a complementary check of the predicted order. In addition, the variation of the slopes in \Cref{fig:convergence_order_log} and of the limiting levels in \Cref{fig:convergence_order_ratio} with respect to $r$ is consistent with the $r$-dependent factor in \eqref{eq:truncation_converg_order}, further supporting the convergence estimate.

\begin{figure}[htbp]
    \centering
    \subfloat[$\log(e_N\sqrt{N!})$\label{fig:convergence_order_log}]{
        \includegraphics[width=0.4\textwidth]{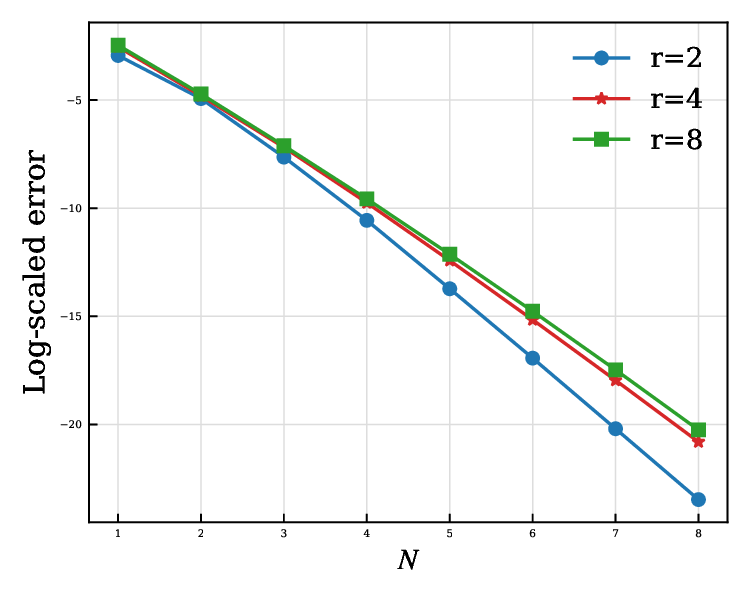}
    }
    \hspace{0.02\textwidth}
    \subfloat[$(e_{N+1}/e_N)\sqrt{N+1}$\label{fig:convergence_order_ratio}]{
        \includegraphics[width=0.4\textwidth]{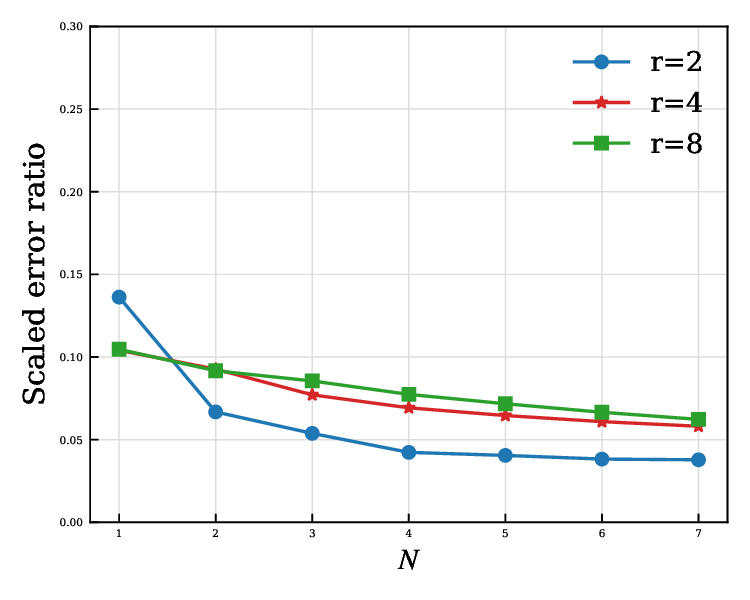}
    }
    \caption{Convergence-order tests with respect to the truncation order $N$ for fixed ranks $r=2,4,8$.}
    \label{fig:convergence_order}
\end{figure}

\subsection{Exponentially decaying BCF}
\label{subsec:experiment_2}
Next, we examine a spin-boson configuration characterized by an exponentially decaying BCF:
\begin{itemize}
    \item The system Hamiltonian is given by $H=\frac{1}{2}\sigma_z$.
    \item The Lindblad operator is given by $L=\sqrt{2}\sigma_z$.
    \item The bath correlation function is given by
    \begin{equation}
        \alpha(t,s) = \frac{\gamma}{2}e^{-\gamma|t-s|}.
    \label{eq:simple_bcf}
    \end{equation}
\end{itemize}
The system is initialized in the state $\psi_0 = (\sqrt{2}/2, \sqrt{2}/2)^{\top}$. Numerical simulations are performed over the time interval $[0,2]$ with a uniform time step $\Delta t=0.02$.

\begin{remark}
    Under this configuration, the problem has an exact solution. The reduced density matrix of the system at time $t$ is given by
    \begin{equation*}
        \rho(t) = \begin{pmatrix}
            \rho_{11}(0) & \rho_{12}(0)e^{-f(t)} \\
            \rho_{21}(0)e^{-f(t)^*} & \rho_{22}(0)
        \end{pmatrix},
    \end{equation*}
    where
    \begin{equation*}
        f(t) = \frac{4}{\gamma}(e^{-\gamma t}+\gamma t-1) + i t.
    \end{equation*}
    For any observable $O\in\mathcal{L}(\mathbb{C}^2)$, the expectation value can be computed as
    \begin{equation*}
        \langle O \rangle_t = \mathrm{Tr}\left(O\rho(t)\right).
    \end{equation*}
\end{remark}

\begin{remark}
    The BCF \eqref{eq:simple_bcf} is a single exponential corresponding to the simplest case for the HOPS method. Notably, this exponentially decaying BCF does not possess an intrinsic low-rank structure. \cref{fig:exp_BCF_approx} illustrates the amplitude of \eqref{eq:simple_bcf} with $\gamma=4$ and its low-rank approximation at ranks $r=10, 50$. Clearly, a high rank is required to achieve a sufficient approximation. Nevertheless, our proposed framework remains applicable by relaxing the approximation requirements. Specifically, we represent the BCF \eqref{eq:simple_bcf} using two functions $U,V\in L^2([0,T],\mathbb{C})$ such that
    \begin{equation}
        \alpha(t,s) = \lambda U(t)V(s), \qquad t\geq s,
    \label{eq:relax_approx}
    \end{equation}
    with $\lambda=\gamma/2$, $U(t) = e^{-\gamma t}$, and $V(t) = e^{\gamma t}$. Substituting \eqref{eq:relax_approx} into the NMSSE \eqref{eq:NMSSE} and following the procedure established in Section \ref{sec:method}, we obtain the hierarchical system
    \begin{equation}
        \frac{\partial}{\partial t}\psi_t^{(k)} = (-iH+z_tL)\psi_t^{(k)} + k\lambda V(t)L\psi_t^{(k-1)} - U(t)L^{\dagger}\psi_t^{(k+1)}
    \label{eq:relax_hierarchy}
    \end{equation}
    for $k=0,1,\cdots,N$ with $\psi_t^{(-1)} \equiv \psi_t^{(N+1)} \equiv 0$. This system is equivalent to the one derived via the standard HOPS method; see \autoref{sec:appendix} for a formal proof.
\end{remark}

\begin{figure}[htbp]
    \centering
    \includegraphics[width=0.5\textwidth]{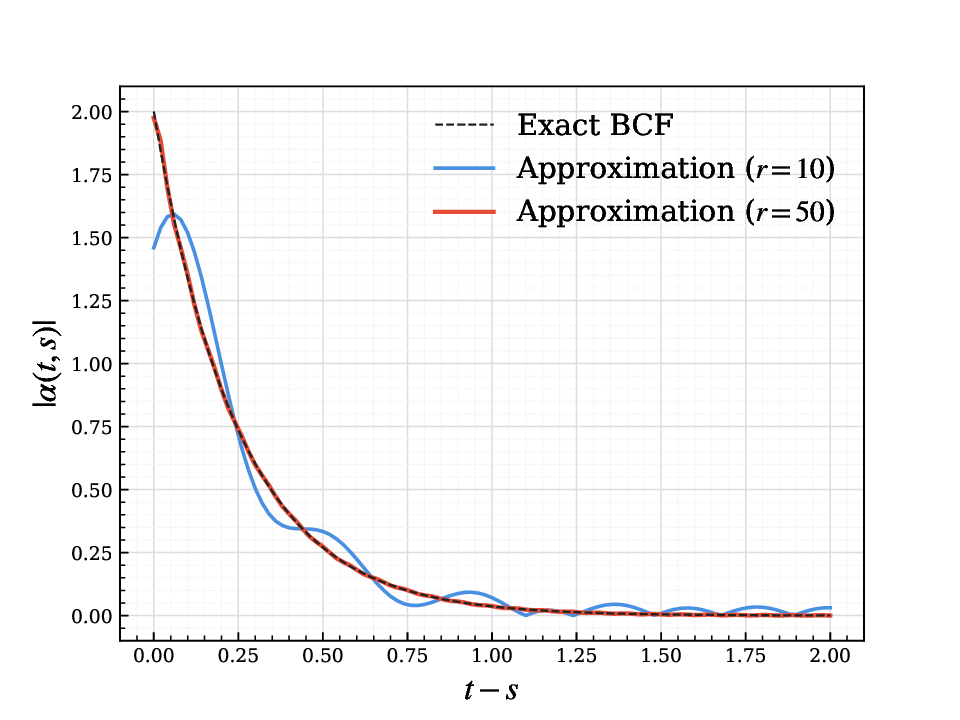}
    \caption{Amplitude of the BCF \eqref{eq:simple_bcf} with $\gamma=4$ and its low-rank approximation.}
    \label{fig:exp_BCF_approx}
\end{figure}

\begin{figure}[htbp]
    \centering
    \includegraphics[width=\textwidth]{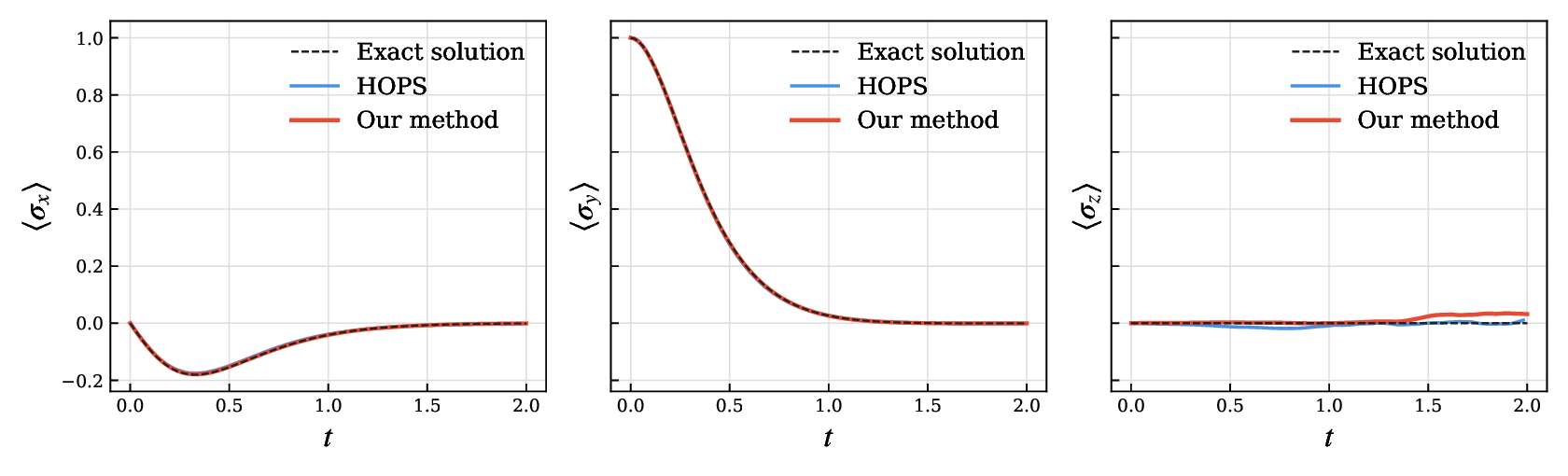}
    \caption{Evolution of $\langle\sigma_x\rangle_t$, $\langle\sigma_y\rangle_t$ and $\langle\sigma_z\rangle_t$ for BCF \eqref{eq:simple_bcf} with $\gamma=4$, comparing the proposed framework and HOPS with $N=16$ and 50,000 samples.}
    \label{fig:simple_hops_ours}
\end{figure}

\cref{fig:simple_hops_ours} compares the numerical results obtained from the proposed framework \eqref{eq:relax_hierarchy} with those from the HOPS method. The agreement between the two approaches demonstrates that for exponential or sum-of-exponential BCFs, the proposed framework recovers the HOPS results by relaxing the low-rank approximation requirements. This observation suggests that the proposed framework serves as a generalization of the HOPS method, extending its applicability to a broader class of BCFs.

\section{Conclusion}
\label{sec:conclusion}

We have developed a low-rank hierarchical framework for the non-Markovian stochastic Schrödinger equation. By approximating the bath correlation function through Mercer's theorem, the non-local stochastic dynamics are reformulated as a coupled hierarchical system that generalizes HOPS without relying on a prescribed multi-exponential expansion of the memory kernel. Under the uniqueness assumption, we proved convergence of the finite-rank, finitely truncated approximation in the $L^1$ sense as both the rank $r$ and the truncation order $N$ increase. The numerical experiments support the theoretical convergence results and illustrate the effectiveness of the proposed method.

Several issues remain for future work. These include extending the analysis to spatially dependent models with unbounded system--bath couplings, establishing well-posedness results that remove the imposed uniqueness assumption, and deriving sharper error estimates to guide the choice of $r$ and $N$ in practical computations.

\appendix
\section{Equivalence between the relaxed hierarchical framework and the HOPS for exponential BCFs}
\label{sec:appendix}
In this section, we prove the equivalence between the relaxed hierarchical framework \eqref{eq:relax_hierarchy} and HOPS for the exponential BCF $\alpha(t,s)=g e^{-w(t-s)}$ for $t\geq s$. Setting $\lambda = g$, $U(t) = e^{-wt}$, and $V(t) = e^{wt}$ in \eqref{eq:relax_hierarchy} yields
\begin{equation*}
    \frac{\partial}{\partial t}\psi_t^{(k)} = (-iH+z_tL)\psi_t^{(k)} + kg e^{wt}L\psi_t^{(k-1)} - e^{-wt}L^{\dagger}\psi_t^{(k+1)}.
\end{equation*}
Denote $\widetilde{\psi}_t^{(k)} = e^{-kwt}\psi_t^{(k)}$. It follows that
\begin{equation*}
\begin{aligned}
    \frac{\partial}{\partial t}\widetilde{\psi}_t^{(k)} =& -kw e^{-kwt}\psi_t^{(k)} + (-iH+z_tL)e^{-kwt}\psi_t^{(k)} \\
    &+ kgL e^{-(k-1)wt}\psi_t^{(k-1)} - L^{\dagger}e^{-(k+1)wt}\psi_t^{(k+1)} \\
    =& (-iH - kw + z_tL)\widetilde{\psi}_t^{(k)} + kgL\widetilde{\psi}_t^{(k-1)} - L^{\dagger}\widetilde{\psi}_t^{(k+1)},
\end{aligned}
\end{equation*}
which is identical to the hierarchical system derived via HOPS (cf. \cite[Eq. (13)]{suess2014HierarchyStochasticPure}). The same transformation applies to BCFs expressed as sums of exponentials.

\section*{Acknowledgments}
The authors would like to thank Quanhui Zhu, Hongfei Zhan, and Shuigen Liu for their valuable suggestions and discussions.

\bibliographystyle{siamplain}
\bibliography{references}
\end{document}


\maketitle

\section{Proof of the infinite series solution}
\label{sup:proof}
In this section, we verify that \eqref{eq:infinite_series_solution} is indeed a mild solution to \eqref{eq:NMSSE}, as stated in \cref{thm:analytical_solution}. 

It is clear that $\psi_t \in D(\mathcal{R}_t)$ for all $t \in [0,T]$. To show that \eqref{eq:infinite_series_solution} satisfies the mild formulation \eqref{eq:mild_formulation}, we invoke Remark \ref{rmk:strong_mild}, which implies that it suffices to demonstrate that $\widetilde{\psi}_t = S(-t)\psi_t$ satisfies the strong form \eqref{eq:strong_solution} or \eqref{eq:NMSSE_interaction} of the NMSSE in the interaction picture. We establish this by differentiating each term $\widetilde{\psi}_t^{(n,m)}$ in the series expansion. First, we rewrite $\widetilde{\psi}_t^{(n,m)}$ as an integral over the $n$-dimensional hypercube $[0,t]^n$:
\begin{equation*}
\begin{aligned}
    \widetilde{\psi}_t^{(n,m)} =& \frac{(-1)^m}{n!}\int_{[0,t]^n}\diff^{n}\boldsymbol{\tau}z_{\tau_n}\cdots z_{\tau_1} \\
    &\quad\ \int_{0<s_1<\cdots<s_{2m}<t}\diff^{2m}\boldsymbol{s} \sum_{\boldsymbol{q}\in\mathscr{Q}(\boldsymbol{s})}\alpha(q_{2m},q_{2m-1})\cdots\alpha(q_2,q_1) \\ 
    &\quad\ \mathscr{T}\left\{\widetilde{L}(\tau_n)\cdots\widetilde{L}(\tau_1)\widetilde{L}^{\dagger}(q_{2m})\widetilde{L}(q_{2m-1})\cdots\widetilde{L}^{\dagger}(q_2)\widetilde{L}(q_1)\right\}\psi_0.
\end{aligned}
\end{equation*}
Applying the chain rule, we split the time derivative of $\widetilde{\psi}_t^{(n,m)}$ into two parts:
\begin{equation*}
    \frac{\partial}{\partial t}\widetilde{\psi}_t^{(n,m)} = \mathcal{D}_{\boldsymbol{\tau}}^{(n,m)} + \mathcal{D}_{\boldsymbol{s}}^{(n,m)},
\end{equation*}
where $\mathcal{D}_{\boldsymbol{\tau}}^{(n,m)}$ and $\mathcal{D}_{\boldsymbol{s}}^{(n,m)}$ correspond to derivatives with respect to the $\tau$- and $s$-integrals, respectively. It is immediate that $\mathcal{D}_{\boldsymbol{\tau}}^{(0,m)}=0$, and for $n\geq 1$,
\begin{equation*}
\begin{aligned}
    \mathcal{D}_{\boldsymbol{\tau}}^{(n,m)} =& \frac{(-1)^m}{(n-1)!}\int_{[0,t]^{n-1}}\diff^{n-1}\boldsymbol{\tau}z_tz_{\tau_{n-1}}\cdots z_{\tau_1} \\
    &\int_{0<s_1<\cdots<s_{2m}<t}\diff^{2m}\boldsymbol{s} \sum_{\boldsymbol{q}\in\mathscr{Q}(\boldsymbol{s})}\alpha(q_{2m},q_{2m-1})\cdots\alpha(q_2,q_1) \\
    &\widetilde{L}(t)\mathscr{T}\left\{\widetilde{L}(\tau_{n-1})\cdots\widetilde{L}(\tau_1)\widetilde{L}^{\dagger}(q_{2m})\widetilde{L}(q_{2m-1})\cdots\widetilde{L}^{\dagger}(q_2)\widetilde{L}(q_1)\right\}\psi_0 \\
    =& z_t\widetilde{L}(t) \widetilde{\psi}_t^{(n-1,m)}.
\end{aligned}
\end{equation*}
For the second part, $\mathcal{D}_{\boldsymbol{s}}^{(n,0)}=0$, and for $m\geq 1$, we have
\begin{equation}
\begin{aligned}
    \mathcal{D}_{\boldsymbol{s}}^{(n,m)} =& \frac{(-1)^m}{n!}\int_{[0,t]^n}\diff^n\boldsymbol{\tau}z_{\tau_n}\cdots z_{\tau_1}\int_{0<s_1<\cdots<s_{2m-1}<t}\diff^{2m-1}\boldsymbol{s} \\
    &\sum_{k=1}^{2m-1}\alpha(t,s_k)\widetilde{L}^{\dagger}(t)
    \sum_{q\in\mathscr{Q}(s_1,\cdots,s_{k-1},s_{k+1},\cdots,s_{2m-1})}\alpha(q_{2m-2},q_{2m-3})\cdots\alpha(q_2,q_1) \\
    &\mathscr{T}\left\{\widetilde{L}(s_k)\widetilde{L}(\tau_n)\cdots\widetilde{L}(\tau_1)\widetilde{L}^{\dagger}(q_{2m-2})\widetilde{L}(q_{2m-3})\cdots\widetilde{L}^{\dagger}(q_2)\widetilde{L}(q_1)\right\}\psi_0
    \label{eq:app_D_s}
\end{aligned}
\end{equation}
To identify $\mathcal{D}_{\boldsymbol{s}}^{(n,m)}$, we state the following claim.
\begin{claim}
    For $n\geq 0, m\geq 1$,
    \begin{equation}
        \mathcal{D}_{\boldsymbol{s}}^{(n,m)} = -\widetilde{L}^{\dagger}(t)\int_{0}^t\diff s\alpha(t,s)\frac{\delta}{\delta z_s}\widetilde{\psi}_t^{(n+1,m-1)}.
    \label{eq:app_proof_1}
    \end{equation}
\end{claim}
\begin{proof}
    First, for $s\in[0,t]$,
    \begin{equation}
    \begin{aligned}
        \frac{\delta}{\delta z_s}\widetilde{\psi}_t^{(n+1,m-1)} =& \frac{(-1)^{m-1}}{(n+1)!}\sum_{k=1}^{n+1}\int_{[0,t]^{n+1}}\diff^{n+1}\boldsymbol{\tau}\delta(\tau_k-s)\prod_{\ell=1,\ell\neq k}^{n+1}z_{\tau_{\ell}} \\
        &\int_{0<s_1<\cdots<s_{2m-2}<t}\diff^{2m-2}\boldsymbol{s}\sum_{\boldsymbol{q}\in\mathscr{Q}(\boldsymbol{s})}\alpha(q_{2m-2}, q_{2m-3})\cdots\alpha(q_2,q_1) \\
        & \mathscr{T}\left\{\prod_{\ell=1}^{n+1}\widetilde{L}(\tau_{\ell})\widetilde{L}^{\dagger}(q_{2m-2})\widetilde{L}(q_{2m-3})\cdots\widetilde{L}^{\dagger}(q_2)\widetilde{L}(q_1)\right\}\psi_0 \\
        =& \frac{(-1)^{m-1}}{n!}\int_{[0,t]^n}\diff^{n}\boldsymbol{\tau}\prod_{\ell=1}^{n}z_{\tau_{\ell}} \\
        &\int_{0<s_1<\cdots<s_{2m-2}<t}\diff^{2m-2}\boldsymbol{s}\sum_{\boldsymbol{q}\in\mathscr{Q}(\boldsymbol{s})}\alpha(q_{2m-2}, q_{2m-3})\cdots\alpha(q_2,q_1) \\
        & \mathscr{T}\left\{\widetilde{L}(s)\prod_{\ell=1}^{n}\widetilde{L}(\tau_{\ell})\widetilde{L}^{\dagger}(q_{2m-2})\widetilde{L}(q_{2m-3})\cdots\widetilde{L}^{\dagger}(q_2)\widetilde{L}(q_1)\right\}\psi_0.
    \label{eq:app_proof_2}
    \end{aligned}
    \end{equation}
    Next, observe that
    \begin{align*}
        \int_{0}^t\diff s\int_{0<s_1<\cdots<s_{2m-2}<t}\diff^{2m-2}\boldsymbol{s} &= \sum_{k=1}^{2m-1}\int_{0<s_1<\cdots<s_{k-1}<s<s_{k}<\cdots<t}\diff s\diff^{2m-2}\boldsymbol{s} \\
        &= \sum_{k=1}^{2m-1}\int_{0<s_1<\cdots<s_{2m-1}<t}\diff^{2m-1}\boldsymbol{s}.
    \end{align*}
    Substituting \eqref{eq:app_proof_2} into
    \[
        \widetilde{L}^{\dagger}(t)\int_0^t\diff s\,\alpha(t,s)\frac{\delta\widetilde{\psi}_t^{(n+1,m-1)}}{\delta z_s}
    \]
    and rearranging terms yields \eqref{eq:app_D_s}, which proves \eqref{eq:app_proof_1}.
\end{proof}

We can now summarize the recursion as follows:
\begin{equation*}
\begin{cases}
    \displaystyle \frac{\partial}{\partial t}\widetilde{\psi}_t^{(n,m)} = z_t\widetilde{L}(t)\widetilde{\psi}_t^{(n-1,m)} - \widetilde{L}^{\dagger}(t)\mathcal{R}_t\widetilde{\psi}_t^{(n+1,m-1)}, & \text{for}\ n\geq1, m\geq 1, \\[2ex]
    \displaystyle \frac{\partial}{\partial t}\widetilde{\psi}_t^{(0,m)} = - \widetilde{L}^{\dagger}(t)\mathcal{R}_t\widetilde{\psi}_t^{(1,m-1)}, & \text{for}\ m\geq0, \\[2ex]
    \displaystyle \frac{\partial}{\partial t}\widetilde{\psi}_t^{(n,0)} = z_t\widetilde{L}(t)\widetilde{\psi}_t^{(n-1,0)}, & \text{for}\ n\geq0.
\end{cases}
\end{equation*}
Using the absolute convergence established in \cref{thm:analytical_solution}, we may sum the above identities over all $n,m\geq 0$. By linearity, this recovers \eqref{eq:NMSSE_interaction}, and therefore \eqref{eq:infinite_series_solution} satisfies \eqref{eq:mild_formulation}.

We next verify that \eqref{eq:infinite_series_solution} satisfies conditions \eqref{eq:mild_condition_a} and \eqref{eq:mild_condition_b}. In view of the estimate \eqref{eq:upper_bound_inf_series} and the boundedness of the operator $L$, condition \eqref{eq:mild_condition_a} is readily satisfied. To establish \eqref{eq:mild_condition_b}, we follow the derivation in \eqref{eq:app_proof_1} and adopt the procedure outlined in \eqref{eq:absolute_converg_estimate}--\eqref{eq:4_10}, which yields
\begin{equation*}
\begin{aligned}
    \left\|\mathcal{R}_t\widetilde{\psi}_t^{(n,m)}\right\|_{\mathcal{H}} \leq& \left(\|L\|_{\mathcal{H}}\int_0^t|\alpha(t,s)|\diff s\right) \frac{1}{(n-1)!}\left(\|L\|_{\mathcal{H}}\int_0^t|z_s|\diff s\right)^{n-1} \\
    &\frac{1}{m!}\left(\frac{\|L\|_{\mathcal{H}}^2}{2}\int_0^t\int_0^t|\alpha(u,v)|\diff u\diff v\right)^m  \|\psi_0\|_{\mathcal{H}}.
\end{aligned}
\end{equation*}
Summing up over $n$ and $m$ yields
\begin{equation*}
    \left\|\mathcal{R}_t\psi_t\right\|_{\mathcal{H}} \leq \left(\|L\|_{\mathcal{H}}\int_0^t|\alpha(t,s)|\diff s\right) \mathcal{M}_t \|\psi_0\|_{\mathcal{H}},
\end{equation*}
where $\mathcal{M}_t$ is defined as in \eqref{eq:upper_bound_inf_series}. We then have
\begin{equation*}
    \int_0^T \left\|\mathcal{R}_t\psi_t\right\|_{\mathcal{H}} \diff t < \infty, \quad \text{a.s.},
\end{equation*}
which confirms \eqref{eq:mild_condition_b}.
